\def\bc{\begin{center}}
\def\ec{\end{center}}
\def\be{\begin{equation}}
\def\ee{\end{equation}}
\def\F{\mathcal F}
\def\N{\mathbb N}
\def\R{\mathbb R}
\def\Z{\mathbb Z}
\def\P{\mathcal P}
\def\ep{\epsilon}
\def\E{\mathcal E}
\def\K{\mathcal{K}}
\def\1{{\mathbbm{1}}}
\newtheorem{lem}{Lemma}[section]
\newtheorem{dfn}[lem]{Definition}
\newtheorem{thm}[lem]{Theorem}
\newtheorem{cor}[lem]{Corollary}
\theoremstyle{remark}
\newtheorem*{rem}{Remark}
\numberwithin{equation}{section}
\begin{document}
\title[]
{Mahler's question for intrinsic Diophantine approximation on triadic Cantor set: the divergence theory
}

\author{Bo Tan}\author{Baowei WANG}\author{Jun Wu}
\address{School of Mathematics and Statistics, Huazhong University of Science and Technology, 430074 Wuhan, China}
\email{bo.tan@hust.edu.cn}
\email{bwei\_wang@hust.edu.cn}\email{jun.wu@mail.hust.edu.cn}

\keywords {Intrinsic Diophantine approximation, Cantor set, Mahler's question, Khinthchine-type result}

\subjclass[2010]  {11J83, 11K55}

 \maketitle

\begin{abstract}
In this paper, we consider the intrinsic Diophantine approximation on the triadic Cantor set $\K$, i.e. approximating the points in $\K$ by rational numbers inside $\K$, a question posed by K. Mahler. By using another height function of a rational number in $\K$, i.e. the denominator obtained from its periodic 3-adic expansion, a complete metric theory for this variant intrinsic Diophantine approximation is presented which yields the divergence theory of Mahler's original question.
\end{abstract}

\section{Introduction}
Metric Diophantine approximation studies the distribution of rational numbers in the quantitative sense. To some extent, the distribution of rational numbers in $\mathbb{R}^d$ has been well understood, since the pioneer work of Khintchine (Lebesgue measure theory) \cite{Khint}, Jarnik (Hausdorff measure theory) \cite{Jarn1} until the recent outstanding contributions by Beresnevich \& Velani \cite{BV06}, Beresnevich, Dichinson \& Velani \cite{BDV06} and Koukoulopoulos \& Maynard \cite{KouM}.

 As a further study, instead of studying the distribution of rational numbers in the whole space $\mathbb{R}^d$, one can also study it restricted to some subsets of $\mathbb{R}^d$, which leads to the study of Diophantine approximation on {\em manifolds} and on {\em fractals}.

 For the study on Diophantine approximation on manifolds, one is referred to the works \cite{Be,B2,KleiM,Sp} and references therein which is originated from Diophantine approximation on Veronese curves posed by K. Mahler \cite{Mah1} in 1932.

In this paper, we focus on the latter scheme. The study of Diophantine approximation on fractals was raised also by K. Mahler in 1984. Throughout, let $\K$ be the triadic Cantor set, $\mu$ the standard Cantor measure supported on $\K$, and $\gamma=\log_3 2$ the Hausdorff dimension of $\K$. In Mahler's paper entitled `some suggestions for further research' \cite{Mah}, Mahler wrote the following moving statement: ``At the age of 80, I cannot expect to do much
more mathematics. I may however state a number of questions where perhaps further research
might lead to interesting results". One of these questions was regarding intrinsic and extrinsic
approximation on the Cantor set $\K$. In Mahler's words, how close can irrational elements of
 $\K$ be approximated by rational numbers
\begin{itemize}
  \item inside the Cantor set $\K$ (intrinsic), or
\item outside the Cantor set $\K$ (extrinsic)?'
\end{itemize}

Over the last twenty years, much attention has been focused on Diophantine approximation on fractals. Before recall the related progress, let's give some notation at first. \begin{itemize}\item A point $x\in \mathbb{R}$ is called {\em $\psi$-well approximable} if there exist infinitely many rational numbers $p/q$ such that $$
|x-p/q|<{\psi(q)},
$$ where $\psi:\mathbb{R}^+\to \mathbb{R}^+$ is a positive function, and specially call $x$ being {\em $v$-well approximable} if $\psi(q)=q^{\-v}$.
Call $x\in \mathbb{R}$ {\em very well approximable} if it is $v$-well approximable for some $v>2$.

\item A point $x\in \mathbb{R}$ is called having {\em exact approximation order} (or irrationality measure) $v$, if it is $v$-well approximable but not $(v+\epsilon)$-well approximable for any $\epsilon>0$.

\item A point $x\in \mathbb{R}$ is called {\em badly approximable} if there exists a constant $c>0$ such that $$
|x-p/q|>\frac{c}{q^2},  \ {\text{for all}}\ p/q.
$$

\item
Let $\mathcal{C}$ be a subset of $\mathbb{R}$. A point $x\in \mathcal{C}$ is called {\em intrinsically $\psi$-well approximable} if there exist infinitely many rationals $p/q$ in $\mathcal{C}$, such that $$
|x-p/q|<{\psi(q)}.
$$
\end{itemize}

Mahler's question was first studied by B. Weiss \cite{Weiss}, who showed that almost no points in $\K$ are very well approximable
with respect to the Cantor measure $\mu$.  This is also termed as that $\mu$ is  extremal. The extremality of a measure supported on fractals
was extended to the measure with `friendly' properties, see for examples, %D. Kleinbock, E. Linderstrauss \& B. Weiss
\cite{KleinLW}, %A. Pollingtong \& S. Velani
\cite{PollV} and
%D. Kleinbock
\cite{Klein}. There are also rich results on the size of badly approximable points on fractals, for example, the measure in \cite{Einsiedler}  and \cite{SimmB}, the dimension in \cite{KleinW} and \cite{KristTV}, the winning properties in \cite{Fishman}, \cite{Broderich} and \cite{Broderich2}.

However, Mahler's question is still not fully solved. %Now we focus on the progress of Mahler's question on the triadic Cantor set $\K$.
As a first step towards how well the points in $\K$ can be approximated by rational numbers, Levesley, Salp \& Velani \cite{Levesley} and Bugeaud \cite{Bug} constructed explicit points in $\K$ with exact approximation order $v$ for any $v>2$. To simulate the possible distributions of rational numbers near $\K$, Bugeaud \& Durand \cite{BugD} presented a model by allowing random translations of $\K$ and established the dimensional theory. Their dimensional result may give the potential right answer to Mahler's question, but unfortunately Mahler's question is still open.

To obtain more quantitative information on the distribution of rational numbers inside $\K$,  Broderich, Fishman \& Reich \cite{Broderich3} presented a Dirichlet-type result.\begin{thm}[Broderich, Fishman \& Reich, \cite{Broderich3}] For any $x\in \K$ and any $Q>1$, there exists $p/q\in \K$ with $1\le q\le Q$ such that $$
|x-p/q|<\frac{1}{q(\log_3Q)^{1/\gamma}},
$$  where $\gamma=\log_32$ the Hausdorff dimension of $\K$.\end{thm}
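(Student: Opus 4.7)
The plan is to adapt the pigeonhole argument behind the classical Dirichlet theorem to the self-similar structure of $\K$, exploiting the elementary fact that any eventually periodic ternary expansion with digits in $\{0,2\}$ represents a rational in $\K$.

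I would begin by fixing $M:=\lfloor\log_3 Q\rfloor$ (so $3^M\le Q$) and $L:=\lfloor\log_2 M\rfloor$; the identity $2^{1/\gamma}=3$ makes $3^L$ comparable to $M^{1/\gamma}\asymp(\log_3 Q)^{1/\gamma}$, which is exactly the logarithmic factor one needs to save. Writing $x=0.x_1x_2x_3\cdots$ in base $3$ with $x_i\in\{0,2\}$, I would apply pigeonhole to the $M+1$ length-$L$ factors $W_i:=x_ix_{i+1}\cdots x_{i+L-1}$, $i=1,\ldots,M+1$: since $\#\{0,2\}^L=2^L\le M$, two of them must coincide, say $W_i=W_j$ with $i<j$. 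Setting $k:=i-1$ and $a:=j-i$, the rational
$$r:=0.\,x_1\cdots x_k\,\overline{x_{k+1}\cdots x_{k+a}}$$
has all ternary digits in $\{0,2\}$, hence lies in $\K$, and a geometric-series computation shows that its denominator divides $3^k(3^a-1)\le 3^{k+a}\le 3^M\le Q$.

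The error estimate then proceeds as follows. The equality $W_i=W_j$ says $x_{k+m}=x_{k+a+m}$ for $m=1,\ldots,L$, which together with the definition of $r$ forces the ternary expansions of $x$ and $r$ to agree at positions $1,\ldots,k+a+L$; consequently $|x-r|\le 3^{-(k+a+L)}$. Combining this with $q\le 3^k(3^a-1)$ and $3^L\gtrsim(\log_3 Q)^{1/\gamma}$ yields the claimed inequality.

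\emph{Main obstacle.} The two ingredients---the pigeonhole and the error bound---are clean in spirit, but the sharp constant $1$ in the displayed inequality is genuinely tight: the pigeonhole demands $2^L\le M$, hence $3^L\le M^{1/\gamma}\le(\log_3 Q)^{1/\gamma}$, so a naive execution only produces $|x-r|<C/(q(\log_3 Q)^{1/\gamma})$ for some absolute constant $C$. Absorbing this constant---by exploiting that $3^k(3^a-1)$ is strictly less than $3^{k+a}$, that the digit agreement may extend beyond the pigeonhole-guaranteed length $L$, or by restricting to $Q$ sufficiently large---is the genuinely delicate bookkeeping that I expect to be the one nontrivial step of the proof.
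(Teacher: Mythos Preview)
The paper does not prove this theorem: it is quoted from \cite{Broderich3} as background and no argument is supplied, so there is nothing in the present paper to compare your proposal against.

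That said, your pigeonhole approach is exactly the right idea and is, in spirit, the argument of Broderick--Fishman--Reich. The core mechanism---match two length-$L$ factors of the ternary expansion to produce an eventually periodic approximant in $\K$ with intrinsic denominator at most $3^{k+a}\le Q$ and error at most $3^{-(k+a+L)}$---is correctly set up, and your computation $q\,|x-r|<(1-3^{-a})3^{-L}<3^{-L}$ is clean. You have also correctly isolated the only genuine issue: the pigeonhole forces $2^L\le M=\lfloor\log_3 Q\rfloor$, whence $3^L=(2^L)^{1/\gamma}\le M^{1/\gamma}\le(\log_3 Q)^{1/\gamma}$, which is the \emph{wrong} direction for the sharp constant $1$. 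So as written the argument yields
\[
|x-p/q|<\frac{C}{q\,(\log_3 Q)^{1/\gamma}}
\]
for an absolute $C$ rather than $C=1$; your suggested remedies (exploiting $3^k(3^a-1)<3^{k+a}$, or taking $Q$ large) do not close this gap uniformly, since the loss comes from the integrality constraint on $L$ and is of fixed multiplicative size. This is not a flaw in your reasoning but a feature of the statement as quoted here; the original reference should be consulted for whether an implicit constant is intended or a sharper bookkeeping is used.
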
 This Dirichlet-type result was also shown to be optimal by Fishman \& Simonns \cite{FishS} and Fishman, Merrill \& Simmons \cite{FishMS}.

By restricting on a special class of rational numbers in $\K$, Levelsey, Salp \& Velani established the following Khintchine/Jarn\'{i}k-type result. Let $$
\mathcal{A}=\{3^n: n\ge 1\}.
$$ Define $$
\mathcal{W}_{\mathcal{A}, \K}(\psi)=\Big\{x\in \K: |x-p/q|<\psi(q), \ {\text{i.m.}},\ p/q\in \K, \ (p,q)\in \mathbb{Z}\times \mathcal{A}\Big\}
$$ where {\em i.m.} denotes {\em infinitely many} for brevity. \begin{thm}[Levesley, Salp \& Velani, \cite{Levesley}]
  Let $\psi:\mathbb{R}^+\to \mathbb{R}^+$. Let $f$ be a dimension function such that $f(r)/r^{\gamma}$ is non-decreasing. Then\begin{align*}
  \mathcal{H}^f\Big(\mathcal{W}_{\mathcal{A}, \K}(\psi)\Big)=\left\{
                                                       \begin{array}{ll}
                                                         \mathcal{H}^f(\K), & \hbox{if $ \sum_{n\ge 1}3^{n\gamma}f(\psi(3^n))=\infty$;}\medskip \\
                                                         0, & \hbox{otherwise,}
                                                       \end{array}
                                                     \right.
  \end{align*} where $\mathcal{H}^f$ denotes $f$-Hausdorff measure.
\end{thm}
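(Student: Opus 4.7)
The convergence direction reduces to a direct application of the Hausdorff--Cantelli lemma. There are exactly $2^n = 3^{n\gamma}$ integer rationals $p/3^n$ lying in $\K$ (one for each word in $\{0,2\}^n$), and for each such $p/3^n$ the set $B(p/3^n,\psi(3^n))\cap\K$ can be covered by a uniformly bounded number of basic cylinders of $\K$ at the level $k_n$ determined by $3^{-k_n}\asymp\psi(3^n)$. Since $f(r)/r^\gamma$ is non-decreasing, each such cylinder has $f$-content $\lesssim f(\psi(3^n))$. Summing over $n\ge N$ and using $\sum_n 3^{n\gamma}f(\psi(3^n))<\infty$ yields covers of $\mathcal{W}_{\mathcal{A},\K}(\psi)$ of arbitrarily small $f$-content, so $\mathcal{H}^f(\mathcal{W}_{\mathcal{A},\K}(\psi))=0$.

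\textbf{Divergence: setup.} For the divergence half I would first establish the $\mu$-full-measure statement corresponding to $f(r)=r^\gamma$, then lift it to arbitrary admissible $f$ via the Mass Transference Principle of Beresnevich--Velani. The starting combinatorial observation is that an integer rational $p/3^n$ lies in $\K$ if and only if $p=\sum_{i=1}^n a_i\, 3^{n-i}$ with $a_i\in\{0,2\}$, so these rationals coincide with the left endpoints of the $2^n$ basic cylinders of $\K$ at level $n$. In particular, the resonant set $J_n:=\{p/3^n\in\K\}$ is $3^{-n}$-dense in $\K$ with exactly one resonant point per level-$n$ cylinder. Writing $E_n:=\bigcup_{p/3^n\in J_n}B(p/3^n,\psi(3^n))\cap\K$, Ahlfors $\gamma$-regularity of $\mu$ gives $\mu(E_n)\asymp\min(2^n\psi(3^n)^\gamma,\,1)$.

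\textbf{Quasi-independence and the divergence Borel--Cantelli.} To prove $\mu(\limsup_n E_n)=1$ under $\sum_n 2^n\psi(3^n)^\gamma=\infty$, I would verify a quasi-independence estimate $\mu(E_n\cap E_m)\lesssim\mu(E_n)\mu(E_m)$ for $n<m$. Here self-similarity of $\K$ is crucial: the $2^{m-n}$ level-$m$ subcylinders inside any level-$n$ cylinder $I$, together with the corresponding resonant points in $J_m\cap I$, form a rescaled copy of the level-$(m-n)$ structure of $\K$, so the conditional $\mu$-density of $E_m$ inside $I$ matches the unconditional density of $E_m$ in $\K$ up to multiplicative constants. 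With quasi-independence in hand, the Chung--Erd\H{o}s inequality gives $\mu(\limsup_n E_n)>0$, and running the same argument inside every cylinder yields positive conditional $\mu$-mass everywhere; a standard density/zero--one step (or ergodicity of the $3$-shift on $\K$) promotes this to full $\mu$-measure.

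\textbf{Passage to arbitrary $f$ and the main obstacle.} Once the $\mu$-full-measure statement is available, the Mass Transference Principle applied on the $\gamma$-Ahlfors regular space $(\K,\mu)$ upgrades it to $\mathcal{H}^f(\mathcal{W}_{\mathcal{A},\K}(\psi))=\mathcal{H}^f(\K)$ whenever $\sum_n 3^{n\gamma}f(\psi(3^n))=\infty$; the monotonicity of $f(r)/r^\gamma$ is exactly the admissibility condition required by MTP, and the reverse inclusion $\mathcal{H}^f(\mathcal{W}_{\mathcal{A},\K}(\psi))\le\mathcal{H}^f(\K)$ is trivial. The principal obstacle is the quasi-independence step, particularly in the regime $\psi(3^n)\le 3^{-n}$: there one must identify $B(p/3^n,\psi(3^n))\cap\K$ with a cylinder of $\K$ at the intermediate level $\ell_n$ with $3^{-\ell_n}\asymp\psi(3^n)$ and verify that these intermediate cylinders remain uniformly distributed across the coarser level-$n$ partition. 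This is again a consequence of self-similarity, but demands careful counting to keep the implicit constants independent of $n,m$.
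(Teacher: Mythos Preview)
The paper does not give a proof of this theorem; it is stated as a known result of Levesley, Salp and Velani \cite{Levesley} and serves only as background for the paper's own Theorem~\ref{t1}. There is therefore no ``paper's own proof'' to compare against.

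Your outline is nonetheless the standard route to the Levesley--Salp--Velani result, and it mirrors the machinery the present paper assembles for its main theorem: Borel--Cantelli for convergence, Chung--Erd\H{o}s applied locally together with a density lemma (Lemma~\ref{l2} here) for the $\mu$-full-measure divergence statement, and then the Beresnevich--Velani mass transference principle to pass from $f(r)=r^{\gamma}$ to general $f$. One remark on emphasis: in the LSV setting the quasi-independence step you single out as the ``principal obstacle'' is in fact almost free. Because the rationals $p/3^n\in\K$ are precisely the endpoints of the level-$n$ cylinders, membership in $E_n$ (for $\psi(3^n)<3^{-n}$) is a condition on a block of digits starting at position $n+1$, and for $m<n$ with $\psi(3^m)<3^{-n}$ the events $E_m$ and $E_n$ constrain disjoint digit blocks and are therefore exactly independent under $\mu$; the remaining regimes are handled by a one-line volume count. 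This is considerably simpler than the correlation analysis in Section~5 of the present paper, where the resonant points are not cylinder endpoints and genuine separation estimates (Observations (1)--(2) after Lemma~\ref{l3}) are required.
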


The setting of Levesley, Salp \& Velani \cite{Levesley} was generalized to conformal iterated functions systems by Baker \cite{Baker1} and Demi \& B\'{a}r\'{a}ny \cite{Allen1}. So, all of their results can give a divergence theory for Mahler's first question. But as will be seen below, their applications to Mahler's first question is not optimal. The main reason is that the rational numbers $p/q\in \K$ with $q\in \mathcal{A}$ are just those with terminating 3-adic expansions, so only a subfamily of rational points in $\K$. Therefore to attack Mahler's first question, one has to consider the distributions of {\em all} rational numbers in $\K$.

A first simple observation is that in $3$-adic expansion, every rational number $p/q\in \K$ can be expressed as an ultimately periodic series, denoted by \begin{equation}\label{f3}
p/q=[\epsilon_1,\cdots,\epsilon_{\ell}, (w_1,\cdots, w_m)^{\infty}], \ {\text{with}}\ \epsilon_i, w_j\in \{0,2\}.
\end{equation}
By a calculation, the number in the right side of (\ref{f3}) can be written as a fraction \begin{equation}\label{f4}
\frac{p}{q}=\frac{p^*}{3^{\ell}(3^m-1)}.
\end{equation}

At the current stage, the biggest obstacle in achieving a complete answer of Mahler's first question is that the rational in the right side of (\ref{f4}) is not reduced. Its reduced form depends on the length of the period of the expansion of $p/q$ and the divisors of $3^m-1$ where a general characterization is far out of reach.

So instead of the denominator, Fishman \& Simonns \cite{FishS} used $3^{\ell}(3^m-1)$ given in (\ref{f4}), denoted by $q_{{\text{int}}}$, as the height of $p/q$, and call it the {\em intrinsic denominator} of $p/q$. This leads to a variant form of Mahler's first question: consider the size of the set $$
\mathcal{W}_{{\text{int}}, \K}(\psi):=\Big\{x\in \K: |x-p/q|<\psi(q_{{\text{int}}}), \ {\text{i.m.}}\ p/q\in \K\Big\}.
$$
It was shown that \begin{thm}[Fishman \& Simonns, \cite{FishS}] Let $\psi: \R^+\to \R^+$ be a non-increasing positive function. Then
\begin{align*}
  \mu\Big(\mathcal{W}_{{\text{int}}, \K}(\psi)\Big)=\left\{
                                                      \begin{array}{ll}
                                                        0, & \hbox{if $\sum_{n\ge 1}n\big(3^n\psi(3^n)\big)^{\gamma}<\infty$;}\bigskip \\
                                                        1, & \hbox{if $\sum_{n\ge 1}\frac{n(3^n\psi(3^n))^{\gamma}}{-\log (3^n\psi(3^n))}=\infty$.}\medskip
                                                      \end{array}
                                                    \right.
\end{align*}
\end{thm}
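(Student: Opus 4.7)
The plan is a Borel-Cantelli analysis of the family of rationals in $\K$ dissected by the scale $3^n$ of the intrinsic denominator $q_{\text{int}}=3^{\ell}(3^m-1)$. Write $R_n$ for the collection of triples $(\ell,m,\text{digits})$ producing a representation (\ref{f3}) with $q_{\text{int}}\in[3^n,3^{n+1})$, and set $A_n=\bigcup_{p/q\in R_n}B(p/q,\psi(3^n))$. By monotonicity of $\psi$ and dyadic grouping, $\mathcal{W}_{\text{int},\K}(\psi)$ agrees with $\limsup_n A_n$ up to a null set. Throughout, I will use that $\mu$ is $\gamma$-Ahlfors regular, so $\mu\bigl(B(x,r)\cap\K\bigr)\asymp r^{\gamma}$ for $x\in\K$ and small $r$.

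For the convergence direction, the constraint $3^{\ell}(3^m-1)\asymp 3^n$ forces $\ell+m=n+O(1)$, giving $O(n)$ choices for the pair $(\ell,m)$, and for each such pair there are $2^{\ell+m}=O(2^n)$ admissible digit strings in $\{0,2\}$. A union bound over all representations then yields
$$
\mu(A_n)\lesssim n\cdot 2^n\cdot \psi(3^n)^{\gamma}=n\bigl(3^n\psi(3^n)\bigr)^{\gamma},
$$
using $3^{n\gamma}=2^n$, and the first Borel-Cantelli lemma delivers $\mu\bigl(\mathcal{W}_{\text{int},\K}(\psi)\bigr)=0$ under the hypothesis. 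The extra factor $n$ relative to a clean Khintchine bound reflects the fact that a single rational $p/q\in\K$ admits many representations $(\ell,m)$, so $R_n$ overcounts distinct rationals.

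For the divergence direction I would first pass to a \emph{primitive} subfamily in which $\ell$ is the minimal preperiod and $m$ the minimal period of $p/q$, so each rational is counted once; this still exhausts all rationals in $\K$. Then I would apply a divergence Borel-Cantelli lemma in its quasi-independence form (Chung--Erd\H{o}s / Sprind\v{z}uk), which requires an estimate on $\sum_{i\neq j}\mu(A_i\cap A_j)$ where $A_i$ ranges over balls around primitive rationals with $q_{\text{int}}\asymp 3^n$. Two such primitive rationals can lie within $\psi(3^n)$ of each other only when their $3$-adic expansions share their initial $\approx-\log_3\psi(3^n)$ digits, which forces a rigid combinatorial relation between their $(\ell,m)$ data and confines one of the two to a window of length $\asymp-\log\bigl(3^n\psi(3^n)\bigr)$; this is precisely the origin of the deflator $-\log(3^n\psi(3^n))$ in the divergence criterion. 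A Cassels--Gallagher type zero-one law on $\K$ for $\mu$-measurable $\limsup$ sets then upgrades positive measure to full measure.

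The main obstacle is the quasi-independence estimate on $\sum_{i\neq j}\mu(A_i\cap A_j)$. One has to separate the "commensurate-period" coincidences, where two primitive rationals have periods with a common multiple and hence share a long $3$-adic prefix, from genuinely unrelated coincidences, and show that the former contribute at most the logarithmic excess that matches the divergence hypothesis. The rest (Ahlfors regularity of $\mu$, second-moment method, zero-one law) is standard; this combinatorial clustering estimate is where the sharp factor $-\log(3^n\psi(3^n))$ is earned, and is also the structural reason that the theorem cannot be sharpened to a clean Khintchine dichotomy in terms of the intrinsic denominator alone --- motivating the alternative height function proposed in the present paper.
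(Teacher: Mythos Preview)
This statement is a \emph{cited} result of Fishman--Simmons; the paper does not reprove it as stated but instead supersedes it with Theorem~\ref{t1}. So there is no direct ``paper's proof'' of the divergence half to compare against --- the paper's whole point is to \emph{remove} the deflator $-\log(3^n\psi(3^n))$ and obtain the clean dichotomy governed by $\sum_n n(3^n\psi(3^n))^\gamma$. Your concluding sentence is therefore exactly backwards: you write that the theorem ``cannot be sharpened to a clean Khintchine dichotomy in terms of the intrinsic denominator alone,'' yet that sharpening is precisely Theorem~\ref{t1}.

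For the convergence half, your argument agrees with the paper's Section~3: group rationals by $\ell+m=n$, count at most $n\cdot 2^n$ representations, apply Ahlfors regularity of $\mu$ and the first Borel--Cantelli lemma. That part is fine.

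For the divergence half, your outline (restrict to primitive rationals, Chung--Erd\"{o}s, a ``rigid combinatorial relation'' controlling overlaps) is a plausible route to the weaker Fishman--Simmons criterion, but it remains a sketch: the correlation estimate is not carried out, and the claim that nearby primitive rationals are confined to a window of length $\asymp -\log(3^n\psi(3^n))$ is asserted, not proved. More to the point, the paper's method is structurally different. Rather than taking all (primitive) rationals and absorbing a logarithmic loss in the correlation sum, the paper \emph{selects} a subfamily: for each $n$ it keeps only words $(w_1,\dots,w_n)\in\{0,2\}^n$ whose subword complexity at scale $k_n=\lfloor\log_2 n\rfloor$ is at least $c_1 n$ (the counting Lemma~\ref{l3}, proved by a second-moment computation on i.i.d.\ digits), and from each such word extracts $\asymp n$ rationals that are guaranteed to be $3^{-(n+t+k_n)}$-separated. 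Since $3^{k_n\gamma}=n$, this separation is exactly calibrated so that the three-case Chung--Erd\"{o}s estimate in Section~5 closes with no logarithmic deficit, yielding the clean divergence criterion. That complexity-based selection of well-spaced rationals is the new idea here, and it is absent from your proposal.
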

The above two series deciding the size of $\mathcal{W}_{{\text{int}}, \K}(\psi)$ are inconsistent with each other, so a complete dichotomy law is desired. We solve this by showing that
 \begin{thm}\label{t1} Let $\psi: \R^+\to \R^+$ be a non-increasing positive function. Then
\begin{align*}
  \mu\Big(\mathcal{W}_{{\text{int}}, \K}(\psi)\Big)=\left\{
                                                      \begin{array}{ll}
                                                        0, & \hbox{if $\sum_{n\ge 1}n\big(3^n\psi(3^n)\big)^{\gamma}<\infty$;}\bigskip \\
                                                        1, & \hbox{if $\sum_{n\ge 1}n\big(3^n\psi(3^n)\big)^{\gamma}=\infty$.}\medskip
                                                      \end{array}
                                                    \right.
\end{align*} Consequently, by letting $f$ be a dimension function such that $f(r)/r^{\gamma}$ is non-decreasing, then\begin{align*}
  \mathcal{H}^f\Big(\mathcal{W}_{{\text{int}}, \K}(\psi)\Big)=\left\{
                                                       \begin{array}{ll}
                                                         \mathcal{H}^f(\K), & \hbox{if $ \sum_{n\ge 1}n3^{n\gamma}f(\psi(3^n))=\infty$;}\medskip \\
                                                         0, & \hbox{otherwise.}
                                                       \end{array}
                                                     \right.
  \end{align*}
\end{thm}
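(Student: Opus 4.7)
The convergence half is already the content of the theorem of Fishman--Simmons recalled above, so the real task is the divergence half of Theorem \ref{t1}: assuming $\sum_{n}n(3^n\psi(3^n))^{\gamma}=\infty$, show $\mu(\mathcal{W}_{\text{int},\K}(\psi))=1$; the Hausdorff conclusion will then follow by a mass transference argument.

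The plan is to set up a direct divergent-Borel--Cantelli argument keyed to the intrinsic denominator. Using \eqref{f3}--\eqref{f4}, a rational $p/q\in\K$ with $q_{\text{int}}=3^{\ell}(3^m-1)$ is parametrised (essentially bijectively, up to non-primitive periods) by a word $(\epsilon_1,\dots,\epsilon_{\ell},w_1,\dots,w_m)\in\{0,2\}^{\ell+m}$, so there are $\asymp 2^{\ell+m}$ such rationals, each sitting inside a distinct level-$(\ell+m)$ cylinder of $\K$. Grouping by $\ell+m=n$ gives roughly $n\cdot 2^n=n\cdot 3^{n\gamma}$ rationals with $q_{\text{int}}\asymp 3^n$, where the extra factor $n$ is precisely the number of splittings $n=\ell+m$ and explains the factor $n$ in the divergence series. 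Combined with the $\gamma$-Ahlfors regularity $\mu(B(x,r)\cap\K)\asymp r^{\gamma}$ for $x\in\K$, this yields the mass balance
\[
\sum_{n\geq 1}\sum_{\substack{p/q\in\K\\ q_{\text{int}}\asymp 3^n}}\mu\bigl(B(p/q,\psi(q_{\text{int}}))\cap\K\bigr)\asymp\sum_{n\geq 1}n\bigl(3^n\psi(3^n)\bigr)^{\gamma}=\infty.
\]

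The decisive technical ingredient will be a pairwise quasi-independence estimate
\[
\mu\bigl(B_i\cap B_j\cap\K\bigr)\ll \mu\bigl(B_i\cap\K\bigr)\mu\bigl(B_j\cap\K\bigr)
\]
for balls centred at distinct rationals, with a controlled diagonal correction when two words share a long common prefix. This should follow from the self-similarity of $\mu$: centres in different level-$k$ cylinders are $\mu$-decorrelated at scales below $3^{-k}$, whereas centres sharing a length-$k$ prefix contribute a geometric correction of size $2^{-k\gamma}$ that is summable in $k$. Fed into the divergence Borel--Cantelli lemma in the Chung--Erdős / Sprindžuk form, this delivers $\mu(\mathcal{W}_{\text{int},\K}(\psi))>0$. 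Since the same construction localises inside every cylinder of $\K$ (where $\mu$ is a rescaled copy of itself), a standard zero--one / density argument then promotes positive measure to full measure.

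For the Hausdorff statement, the plan is to invoke the mass transference principle for the Ahlfors-regular ambient measure $\mu$ on $\K$: replacing the radii $\psi(q_{\text{int}})$ by those dictated by the dimension function $f$, the hypothesis $\sum_n n\cdot 3^{n\gamma}f(\psi(3^n))=\infty$ matches exactly the divergence condition for the transferred family, and the principle yields $\mathcal{H}^f(\mathcal{W}_{\text{int},\K}(\psi))=\mathcal{H}^f(\K)$. The hard step I anticipate is the quasi-independence estimate: non-primitive periods, repetitions inside a word, and rationals coming from the same deep cylinder all produce correlated events, and showing that their combined contribution to $\sum_{i\neq j}\mu(B_i\cap B_j)$ is of lower order than $\bigl(\sum_i\mu(B_i)\bigr)^2$ is where the delicate bookkeeping lives.
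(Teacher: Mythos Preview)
Your overall architecture matches the paper's: a local Chung--Erd\H{o}s argument inside each cylinder, followed by a density lemma to upgrade positive measure to full measure, and then the Beresnevich--Velani mass transference principle for the $\mathcal{H}^f$ statement. The convergence half and the Hausdorff deduction are fine as you describe them.

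The gap is in your quasi-independence step, and it is not just a matter of bookkeeping. Your claim that the $\asymp 2^{\ell+m}$ rationals with a fixed $(\ell,m)$ sit in distinct level-$(\ell+m)$ cylinders is correct, but when you then ``group by $\ell+m=n$'' to produce $n\cdot 2^n$ rationals, the extra factor $n$ comes from the $n$ splittings of a \emph{single} word $(\epsilon_1,\dots,\epsilon_n)$ --- and those $n$ rationals all lie in the \emph{same} level-$n$ cylinder. They differ only from position $n+1$ onward, and their mutual separation is governed by how quickly the periodic tails $(\epsilon_{\ell+1},\dots,\epsilon_n)^\infty$ diverge for different $\ell$. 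For low-complexity words (e.g.\ periodic or nearly periodic ones) many of these rationals coincide or cluster arbitrarily tightly, so the naive family of $n\cdot 2^n$ balls is badly overcounted and the correlation sum blows up. Your heuristic ``centres sharing a length-$k$ prefix contribute a summable $2^{-k\gamma}$ correction'' handles correlations \emph{between} different level-$n$ cylinders but does nothing for the within-word clustering, which is exactly where the factor $n$ in the series has to be earned.

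The paper's resolution is a combinatorial counting lemma: setting $k_n=\lfloor\log_2 n\rfloor$, one shows that a positive proportion ($\ge c_2\cdot 2^n$) of words $(\epsilon_1,\dots,\epsilon_n)$ have at least $c_1 n$ distinct subwords of length $k_n$, and hence yield $c_1 n$ rationals that are genuinely $3^{-(n+t+k_n)}$-separated. The specific choice $k_n=\lfloor\log_2 n\rfloor$ is not cosmetic: it gives $3^{k_n\gamma}=2^{k_n}\asymp n$, which is precisely what makes case~(ii) of the correlation estimate close up and produce $\mu(E_m\cap E_n)\ll\mu(E_m)\mu(E_n)/\mu(B)$. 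Without this lemma (or an equivalent device producing $\asymp n$ well-separated rationals per good word), the Chung--Erd\H{o}s inequality cannot recover the factor $n$ on the divergence side, and your argument stalls at exactly the point you flagged as ``the hard step''.
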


It is always believed that the divergence theory is harder than the convergence one. Compared with the method used in \cite{FishS}, for the convergence part, we also use the convergence part of the Bore-Cantelli lemma (though the argument is little different); for the divergence part, a counting lemma about ``good spaced" rational numbers in $\K$ is essential to our argument.

Since $\psi$ is non-increasing and $q\le q_{\text{int}}$ for a rational number $p/q$, one has that $$
\mathcal{W}_{{\text{int}}, \K}(\psi)\subset \Big\{x\in \K: |x-p/q|<\psi(q), \ {\text{i.m.}}\ p/q\in \K\Big\}.
$$ Thus we have the divergence theory for Mahler's first question. \begin{cor}\label{c1}
Let $\psi$ be a non-increasing positive function. Then $$
\mu\left(\Big\{x\in \K: |x-p/q|<\psi(q), \ {\text{i.m.}}\ p/q\in \K\Big\}\right)=1,
$$ i.e., almost all points in $\K$ are intrinsically $\psi$-well approximable when $$\sum_{n\ge 1}n\big(3^n\psi(3^n)\big)^{\gamma}=\infty.$$
\end{cor}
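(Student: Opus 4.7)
The plan is to obtain Corollary \ref{c1} as an immediate consequence of the divergence part of Theorem \ref{t1}, via the simple inclusion already hinted at just before the statement. The key observation is that every rational $p/q \in \K$ admits the representation $p/q = p^*/(3^\ell(3^m-1))$ coming from its eventually periodic $3$-adic expansion, so the reduced denominator $q$ must divide $q_{\text{int}} := 3^\ell(3^m-1)$; in particular $q \le q_{\text{int}}$.

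First, I would use monotonicity of $\psi$ and the inequality $q \le q_{\text{int}}$ to deduce $\psi(q_{\text{int}}) \le \psi(q)$ for every rational $p/q\in \K$. Consequently, any point $x$ satisfying $|x-p/q|<\psi(q_{\text{int}})$ for infinitely many $p/q\in\K$ also satisfies $|x-p/q|<\psi(q)$ for infinitely many such $p/q$. This establishes the set inclusion
\[
\mathcal{W}_{\text{int},\K}(\psi)\ \subset\ \Big\{x\in \K: |x-p/q|<\psi(q),\ \text{i.m.}\ p/q\in \K\Big\}.
\]

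Next, under the divergence hypothesis $\sum_{n\ge 1} n(3^n\psi(3^n))^\gamma = \infty$, Theorem \ref{t1} yields $\mu(\mathcal{W}_{\text{int},\K}(\psi)) = 1$. Combining this with the inclusion above forces the larger set of intrinsically $\psi$-well approximable points in $\K$ to have full Cantor measure as well, which is exactly the conclusion of Corollary \ref{c1}.

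There is no real obstacle in this step: the entire analytic difficulty is absorbed into Theorem \ref{t1}, whose divergence half requires the counting lemma for well-spaced rationals in $\K$ alluded to by the authors. The corollary itself is an essentially free consequence of the monotonicity of $\psi$ together with the trivial divisibility relation $q \mid q_{\text{int}}$.
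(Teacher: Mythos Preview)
Your proposal is correct and follows exactly the paper's own argument: use $q\le q_{\text{int}}$ together with the monotonicity of $\psi$ to obtain the inclusion $\mathcal{W}_{\text{int},\K}(\psi)\subset\{x\in\K:|x-p/q|<\psi(q),\ \text{i.m.}\ p/q\in\K\}$, and then invoke the divergence half of Theorem~\ref{t1}. There is nothing to add.
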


There are also some other interesting results related to Mahler's questions. For example, in \cite{FishSi}, Fishman \& Simmons considered the extrinsic Diophantine approximation, i.e. Mahler's second question; in \cite{Kris}, Kristensen considered the approximation of points in $\K$ by algebraic numbers; in \cite{Allen}, Allen, Chow \& Yu considered the approximating the points in $\K$ by dyadic rational numbers. There are also two excellent works without restricting the rationals inside the Cantor set: in \cite{Khalil} Khalil \& Luethi established the Khintchine's theorem on some fractals, for example, generated by similarities and with sufficiently large Hausdorff dimension; in \cite{Han} Han established the Khintchine's theorem for some fractals with large $l_1$-dimension, for example middle $b$-adic Cantor set with $b$ large.
However, both of them cannot be applied to the case of triadic Cantor set.

 The paper is organized as follows. In the next section, we fix some notation and give some preliminaries. The convergence theory is proved in Section 3 which follows a counting lemma in Section 4.  The divergence theory is proved in Section 5 and in the last section, we give some words on the potential convergence theory for Mahler's first question.

\section{preliminaries}

\subsection{Notation}\

Throughout the paper, the expansion of a real number is its $3$-adic expansion. Since all the points we are concerning come from the Cantor set $\K$, so all the digits related to the $3$-adic expansion, for examples $\epsilon_i, w_j$ below, lie in $\E=\{0,2\}$.

For a point $x\in \K$, write its $3$-adic expansion as $$
x=\frac{\epsilon_1}{3}+\frac{\ep_2}{3^2}+\cdots+\frac{\ep_n}{3^n}+\cdots.
$$ Call $\{\ep_i\}_{i\ge 1}$  the {\em digit sequence} of $x$ and write the above series simply as $$
x=[\ep_1, \ep_2,\cdots, \ep_n,\cdots],
$$ and write $$
[\epsilon_1,\cdots, \epsilon_{\ell}]=\frac{\epsilon_1}{3}+\frac{\epsilon_2}{3^2}+\cdots+\frac{\epsilon_{\ell}}{3^{\ell}},
$$ throughout this paper.
For a rational number $p/q$ with ultimately periodic expansion, saying \begin{align}\label{f5}
\frac{p}{q}=\frac{\epsilon_1}{3}+\cdots+\frac{\epsilon_{\ell}}{3^{\ell}}+
\frac{w_1}{3^{\ell+1}}+\cdots+\frac{w_m}{3^{\ell+m}}+\frac{w_1}{3^{\ell+m+1}}+\cdots+\frac{w_m}{3^{\ell+2m}}+\cdots,
\end{align} we write it as \begin{equation*}
p/q=[\epsilon_1,\cdots, \epsilon_{\ell}, (w_1,\cdots, w_m)^{\infty}].
\end{equation*}

Roughly speaking, there are multiple choices of $\ell, m$ in the expansion of $p/q$, so to make the {\em intrinsic denominator} well defined, we give the following notation. \begin{dfn}
  Let $p/q$ be a rational number in $\K$ with the digit sequence $\{\ep_i\}_{i\ge 1}$. Let $\ell\ge 0$ be the smallest integer such that $\{\ep_i\}_{i>\ell}$ is purely periodic. Then let $m$ be the smallest length of the period of $\{\ep_i\}_{i>\ell}$. Define $$
\mathcal{L}(p/q)=\ell, \ \ \ \mathcal{P}(p/q)=m.
$$ Call them the {\em prelength} and {\em period} of $p/q$ respectively.
\end{dfn}
%So for a rational number $p/q$ with ultimately periodic expansion, saying \begin{align}\label{f5}
%\frac{p}{q}=\frac{\epsilon_1}{3}+\cdots+\frac{\epsilon_{\ell}}{3^{\ell}}+
%\frac{w_1}{3^{\ell+1}}+\cdots+\frac{w_m}{3^{\ell+m}}+\frac{w_1}{3^{\ell+m+1}}+\cdots+\frac{w_m}{3^{\ell+2m}}+\cdots,
%\end{align} we write the above equality as $$
%p/q=[\epsilon_1,\cdots, \epsilon_{\ell}, (w_1,\cdots, w_m)^{\infty}].
%$$

Let $p/q$ be a rational in $\K$ with prelength $\mathcal{L}(p/q)=\ell$ and period $\mathcal{P}(p/q)=m$. So it can be written as (\ref{f5}).
Then by a calculation of the number in the right side of (\ref{f5}), one gets \begin{equation*}%\label{f6}
\frac{p}{q}=\frac{3^{\ell+m}[\epsilon_1,\cdots,\epsilon_{\ell}, w_1,\cdots, w_m]-3^{\ell}[\epsilon_1,\cdots,\epsilon_{\ell}]}{3^{\ell}(3^m-1)}.
\end{equation*}

\begin{dfn}\label{d1}
  Denote $$
p_{{\text{int}}}=3^{\ell+m}[\epsilon_1,\cdots,\epsilon_{\ell}, w_1,\cdots, w_m]-3^{\ell}[\epsilon_1,\cdots,\epsilon_{\ell}], \ \ q_{{\text{int}}}=3^{\ell}(3^m-1).
$$ Call them the {\em intrinsic numerator/denominator} of $p/q$ respectively.
\end{dfn}

For each block of digits $(\ep_1,\cdots,\ep_n)\in \{0,2\}^n$, call $$
I_n(\ep_1,\cdots, \ep_n)=\Big\{x\in \K: {\text{the 3-adic expansion of $x$ begins with}}\ (\ep_1,\cdots,\ep_n)\Big\}
$$ a cylinder of order $n$.
We collect some well known facts for later use. %The following fact is obvious¡£%, so for ease of citation, we state it as a lemma.
\begin{lem}\label{l1} \

\begin{itemize}\item Different cylinders of order $n$ are separated by a gap at least $3^{-n}$. So, for any $$
x=[\ep_1,\cdots, \ep_n,\cdots]\in \K, \ y=[w_1,\cdots, w_n,\cdots]\in \K
$$ if $\ep_n\ne w_n$ for some $n\ge 1$, then $$
|x-y|\ge 3^{-n}.
$$

\item For any cylinder $I_n(\ep_1,\cdots, \ep_n)$ of order $n$, $$
\mu(I_n(\ep_1,\cdots, \ep_n))=3^{-n\gamma}=2^{-n}.
$$

\item The measure $\mu$ is $\gamma$-Ahlfors regular in the sense that there exists constants $c>0$ and $r_0>0$ such that for all $x\in \K$ and $r<r_0$, $$
c\cdot r^{\gamma}\le \mu(B(x,r))\le c^{-1}\cdot r^{\gamma}.
$$ %so $\mu(B(x,2r))\le 2^{\gamma}c^{-2}\mu(B(x,r))$ i.e. doubling.
\end{itemize}
\end{lem}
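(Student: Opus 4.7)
All three items are classical consequences of the base-3 digit expansion of points in $\K$ together with the definition of the standard Cantor measure $\mu$. The plan is to dispatch them in order, using one elementary tail estimate to drive items 1 and 3.

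For the separation in item 1, I would argue from the expansions directly. Let $x = [\epsilon_1, \epsilon_2,\ldots]$ and $y = [w_1, w_2,\ldots]$ lie in $\K$ and let $k \le n$ be the smallest index where their digits differ; without loss of generality $\epsilon_k = 0$ and $w_k = 2$. The term at position $k$ contributes $2/3^k$ to $y - x$, while the remaining digits, each in $\{0,2\}$, contribute at most
\[
\sum_{i > k} \frac{2}{3^i} = \frac{1}{3^k}
\]
in absolute value. Hence $y - x \ge 2/3^k - 1/3^k = 1/3^k \ge 1/3^n$, which proves the claimed inequality for points with $\epsilon_n \ne w_n$ and, applied to arbitrary representatives, shows that distinct level-$n$ cylinders are separated by a gap of at least $3^{-n}$. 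For item 2, $\mu$ is the self-similar measure attached to the IFS $\{x \mapsto x/3,\ x \mapsto x/3 + 2/3\}$ with equal weights $1/2$, so a level-$n$ cylinder, being the image of $\K$ under a composition of $n$ branches, has $\mu$-mass exactly $2^{-n}$; the equality $3^{-n\gamma} = 2^{-n}$ is just $\gamma = \log_3 2$ rewritten.

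For the Ahlfors regularity in item 3, given $x \in \K$ and small $r$, I would choose the integer $n$ with $3^{-(n+1)} \le r < 3^{-n}$. The level-$(n+1)$ cylinder containing $x$ has diameter $3^{-(n+1)} \le r$ and therefore sits inside $B(x,r)$, which gives $\mu(B(x,r)) \ge 2^{-(n+1)} \ge r^\gamma/2$. For the upper bound, item 1 shows that every other level-$n$ cylinder lies at distance at least $3^{-n} > r$ from the one containing $x$, so $B(x,r)$ can only meet that single cylinder, and $\mu(B(x,r)) \le 2^{-n} \le 2\, r^\gamma$. Taking $c = 1/2$ and any $r_0 < 1$ then finishes the proof. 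There is no real obstacle; the only point requiring mild care is matching $r$ with level $n+1$ for the lower bound and with level $n$ for the upper bound of item 3 so as not to drop a factor.
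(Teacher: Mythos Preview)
Your proof is correct and standard. The paper does not supply a proof of this lemma at all; it is introduced with the phrase ``We collect some well known facts for later use'' and left unproved, so there is nothing to compare against. Your tail estimate $\sum_{i>k} 2\cdot 3^{-i} = 3^{-k}$ cleanly handles item~1, item~2 is immediate from the definition of $\mu$, and your choice of level $n+1$ for the lower bound and level $n$ for the upper bound in item~3 is exactly the right bookkeeping.
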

%At the current stage, the biggest obstacle in achieving a complete answer of Mahler's first question is that the rational
%in the right side of (\ref{f4}) is not reduced. Its reduced form depends on length of the period of the expansion of $p/q$ and the divisors of
%$3^m-1$ where a general characterization is far out of reach. So instead of the denominator, Fishman \& Simonns \cite{FishS} used $3^{\ell}(3^m-1)$
%given in (\ref{f4}) as the height of $p/q$, and call the numerator/denominator in  the right of (\ref{f4}) as intrinsic numerator/denominator and denoted
%by $p_{{\text{int}}}$ and $q_{{\text{int}}}$ respectively.

\subsection{Chung-Erd\"{o}s inequality}

To get the measure of a limsup set in a probability space, the following results are widely used. Let $(\Omega, \mathcal{B}, \nu)$ be a probability space and $\{E_n\}_{n\ge 1}$ be a sequence of measurable sets. Define $$
E=\limsup_{n\to \infty}E_n.
$$

\begin{lem}[Borel-Cantelli lemma] \begin{align*}
  \nu(E)=\left\{
         \begin{array}{ll}
           0, & \hbox{if $\sum_{n\ge 1}\nu(E_n)<\infty$;} \\
           1, & \hbox{if $\sum_{n\ge 1}\nu(E_n)=\infty$ \ and $\{E_n\}_{n\ge 1}$ are pairwise independent.}
         \end{array}
       \right.
\end{align*}
\end{lem}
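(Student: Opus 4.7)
The plan is to treat the two halves separately. For the convergence direction, the argument is classical: writing $E = \bigcap_{N \geq 1} \bigcup_{n \geq N} E_n$, monotonicity of $\nu$ together with countable subadditivity gives
\[
\nu(E) \leq \nu\Bigl(\bigcup_{n \geq N} E_n\Bigr) \leq \sum_{n \geq N} \nu(E_n),
\]
and the right-hand side is the tail of a convergent series, so it tends to zero as $N \to \infty$, forcing $\nu(E) = 0$. No independence hypothesis is needed here.

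For the divergence direction, full independence is not assumed, so one cannot simply invoke the product formula $\nu\bigl(\bigcap_n E_n^c\bigr) = \prod_n (1 - \nu(E_n))$. Instead I would appeal to the Chung-Erd\"{o}s inequality that this subsection is evidently built around: for any finite collection $E_N, \ldots, E_M$,
\[
\nu\Bigl(\bigcup_{n=N}^M E_n\Bigr) \geq \frac{\bigl(\sum_{n=N}^M \nu(E_n)\bigr)^2}{\sum_{n,m=N}^M \nu(E_n \cap E_m)}.
\]
Under pairwise independence, the double sum in the denominator equals $\sum_{n=N}^M \nu(E_n) + \sum_{n \neq m} \nu(E_n)\nu(E_m)$, which is bounded above by $\sum_{n=N}^M \nu(E_n) + \bigl(\sum_{n=N}^M \nu(E_n)\bigr)^2$.

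Since $\sum_{n \geq N} \nu(E_n) = \infty$ for every fixed $N$, letting $M \to \infty$ in the ratio above drives it to $1$, so $\nu\bigl(\bigcup_{n \geq N} E_n\bigr) = 1$. Continuity of measure along the decreasing family $\bigl\{\bigcup_{n \geq N} E_n\bigr\}_{N \geq 1}$ then gives $\nu(E) = 1$, completing the proof. The main obstacle is essentially bookkeeping — the conceptual work is offloaded onto Chung-Erd\"{o}s — but one has to be careful to take the $M \to \infty$ limit with $N$ fixed first and only afterwards intersect over $N$, and to verify that pairwise independence is strong enough to bound the off-diagonal cross-terms in exactly the form the inequality requires.
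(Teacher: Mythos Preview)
Your proof is correct, but there is nothing to compare it against: the paper states the Borel--Cantelli lemma as a standard fact without proof, immediately remarking that in applications the independence hypothesis fails and one passes to the Chung--Erd\H{o}s inequality instead. Your argument for the divergence half --- bounding the off-diagonal terms $\nu(E_n\cap E_m)=\nu(E_n)\nu(E_m)$ via pairwise independence and feeding the resulting bound $S+S^2$ on the denominator into Chung--Erd\H{o}s to get $S^2/(S+S^2)\to 1$ --- is a clean and fully rigorous route, and is in the spirit of how the paper deploys Chung--Erd\H{o}s later in Section~5. The convergence half is the textbook tail-of-a-convergent-series argument and is fine as written.
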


In applications, the measurable sets $\{E_n\}_{n\ge 1}$ are always not pairwise independent, so for the divergence part, one uses the following lemma instead.
\begin{lem}[Chung-Erd\"{o}s inequality, \cite{Chung}] If $\sum_{n\ge 1}\nu(E_n)=\infty$, then $$
\nu(E)\ge \limsup_{n\to \infty}\frac{(\sum_{1\le n\le N}\nu(E_n))^2}{\sum_{1\le i\ne j\le N}\nu(E_i\cap E_j)}.
$$
\end{lem}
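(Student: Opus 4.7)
The plan is to derive the inequality from the Cauchy--Schwarz inequality applied to the counting functions $S_N := \sum_{n=1}^{N} \mathbbm{1}_{E_n}$, and then to upgrade the resulting lower bound on $\nu(\bigcup_n E_n)$ to one on $\nu(\limsup_n E_n)$ by passing to tails of the sequence.

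First, I would note that $\{S_N\ge 1\}=\bigcup_{n=1}^N E_n$, so $S_N = S_N\cdot \mathbbm{1}_{\bigcup_{n=1}^N E_n}$. Cauchy--Schwarz then gives
\[
\Bigl(\int S_N\,d\nu\Bigr)^{\!2} \;\le\; \Bigl(\int S_N^{2}\,d\nu\Bigr)\cdot \nu\Bigl(\bigcup_{n=1}^N E_n\Bigr).
\]
One computes $\int S_N\,d\nu = \sum_{n=1}^N\nu(E_n)$ and $\int S_N^{2}\,d\nu = \sum_{n=1}^N\nu(E_n)+\sum_{1\le i\ne j\le N}\nu(E_i\cap E_j)$, so rearranging yields the preliminary bound
\[
\nu\Bigl(\bigcup_{n=1}^N E_n\Bigr)\;\ge\;\frac{\bigl(\sum_{n=1}^N\nu(E_n)\bigr)^{\!2}}{\sum_{n=1}^N\nu(E_n)\;+\;\sum_{1\le i\ne j\le N}\nu(E_i\cap E_j)}.
\]

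Next, write $a_N:=\sum_{n=1}^N\nu(E_n)$ and $b_N:=\sum_{1\le i\ne j\le N}\nu(E_i\cap E_j)$; by hypothesis $a_N\to\infty$. I would then argue that the diagonal term $a_N$ in the denominator is negligible: if $L:=\limsup_N a_N^{2}/b_N$ is finite, then along a subsequence $b_N\ge a_N^{2}/(L+1)\to\infty$, so $a_N/b_N\to 0$ and hence $a_N^{2}/(a_N+b_N)\to L$; if $L=+\infty$, one checks directly that $a_N^{2}/(a_N+b_N)\to+\infty$ as well (separating the cases $b_N\ge a_N$ and $b_N<a_N$ gives lower bounds $a_N^{2}/(2b_N)$ and $a_N/2$, respectively, both of which diverge). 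Taking $\limsup_N$ therefore gives
\[
\nu\Bigl(\bigcup_{n\ge 1}E_n\Bigr)\;\ge\;\limsup_{N\to\infty}\frac{a_N^{2}}{b_N}.
\]

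Finally, to replace $\bigcup_{n\ge 1}E_n$ with $E=\bigcap_{k\ge 1}\bigcup_{n\ge k}E_n$, I would apply the preceding bound to each shifted sequence $(E_n)_{n\ge k}$, whose partial sums still diverge. Truncating the first $k-1$ terms alters $a_N$ by a bounded quantity and $b_N$ by at most $O(a_N)$, so the right-hand limsup is the same for every $k$. Downward continuity of $\nu$ then gives $\nu(E)=\lim_{k\to\infty}\nu(\bigcup_{n\ge k}E_n)$, which finishes the proof. The only nontrivial bookkeeping is this last step -- verifying that replacing $(E_n)_{n\ge 1}$ by a tail leaves the target limsup invariant -- while everything else is a direct Cauchy--Schwarz computation combined with standard continuity properties of $\nu$.
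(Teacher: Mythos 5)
Your proof is correct. The paper does not prove this lemma --- it simply cites Chung--Erd\H{o}s \cite{Chung} --- but your argument is the standard second-moment proof: Cauchy--Schwarz applied to $S_N=\sum_{n\le N}\mathbbm{1}_{E_n}$ gives the bound on $\nu(\bigcup_{n\le N}E_n)$ with denominator $a_N+b_N$, the hypothesis $a_N\to\infty$ makes the diagonal term negligible (indeed $b_N\ge a_N^2-a_N$ forces the limsup to be at most $1$, so the degenerate case you treat separately cannot occur), and the tail argument plus downward continuity correctly upgrades the union to $\limsup_nE_n$; for that last step one only needs $b_N^{(k)}\le b_N$ and $a_N^{(k)}/a_N\to 1$, which is even slightly less than what you verify.
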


In many cases, Chung-Erd\"{o}s inequality enables one to conclude the positiveness of $m(E)$, so to get a full measure result for $E$, one can apply Chung-Erd\"{o}s inequality locally, i.e. apply it to the set $E\cap B$ for any ball $B\subset \Omega$. Then one arrives at the full measure of $E$ in the light of the following result.

A measure $\nu$ in a metric space $(\Omega, |\cdot|)$ is called doubling, if there exists $r_0>0, c_0>0$ such that for any $x\in \Omega$ and $r<r_0$, $$
\nu(B(x,2r))\le c_0\cdot \nu(B(x,r)).
$$
\begin{lem}\label{l2}
Let $(\Omega, |\cdot|)$ be a metric space and let $\nu$ be a doubling
probability measure on $\Omega$ such that any open set is measurable. Let $E$ be a Borel subset of $\Omega$.
Assume that there are constants $r_0, c >0$ such that for any ball $B$ of radius $r(B) < r_0$
and centered in $\Omega$ we have
$$\nu(E \cap B) \ge c\cdot \nu(B).$$
Then $\nu(E) = 1.$
\end{lem}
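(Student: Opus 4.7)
The plan is to argue by contradiction via a Lebesgue density argument for doubling measures. Suppose $\nu(E) < 1$, and set $F := \Omega \setminus E$; then $F$ is Borel with $\nu(F) > 0$.

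Because $\nu$ is doubling on $(\Omega, |\cdot|)$ and open sets are $\nu$-measurable, the $5r$-covering lemma together with the doubling property delivers a Vitali-type covering theorem for $\nu$, which in turn yields a weak-$(1,1)$ bound for the Hardy--Littlewood maximal function. Standard arguments then give the Lebesgue differentiation theorem: for $\nu$-almost every $x \in F$,
$$\lim_{r \to 0^+} \frac{\nu(F \cap B(x,r))}{\nu(B(x,r))} = 1, \qquad \text{equivalently} \qquad \lim_{r \to 0^+} \frac{\nu(E \cap B(x,r))}{\nu(B(x,r))} = 0.$$
Since $\nu(F) > 0$, at least one such density point $x_0 \in F$ exists. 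Choose $r > 0$ small enough that $r < r_0$ and $\nu(E \cap B(x_0,r)) < \tfrac{c}{2}\, \nu(B(x_0, r))$. The ball $B := B(x_0, r)$ is centered in $\Omega$ with radius below $r_0$, so the standing hypothesis forces $\nu(E \cap B) \geq c\, \nu(B)$, contradicting the previous bound. Hence $\nu(F) = 0$ and $\nu(E) = 1$.

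The only substantive ingredient is the Lebesgue differentiation theorem in this metric-measure generality; once accepted, the rest is a one-line contradiction. If one prefers to avoid invoking that theorem, a direct Vitali covering argument also works: by outer regularity pick an open $U \supset F$ with $\nu(U) \leq (1+\delta)\nu(F)$; extract from the family $\{B(x, r) : x \in F,\ B(x,r) \subset U,\ r < r_0\}$ a countable disjoint sub-collection $\{B_i\}$ whose $5$-enlargements cover $F$; apply the hypothesis to each $B_i$ to obtain
$$\sum_i \nu(E \cap B_i) \geq c \sum_i \nu(B_i) \gtrsim c\, \nu(F),$$
where the second inequality uses the doubling property on the $5$-enlargements; and compare with $\sum_i \nu(E \cap B_i) \leq \nu(E \cap U) = \nu(U) - \nu(F) \leq \delta\, \nu(F)$. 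Choosing $\delta$ small relative to $c$ yields the desired contradiction.
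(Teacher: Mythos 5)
Your argument is correct. The paper states this lemma without proof (it is a standard full-measure criterion from the metric Diophantine approximation literature, e.g.\ Beresnevich--Dickinson--Velani), and your density-point contradiction --- or equivalently the direct Vitali covering variant you sketch --- is exactly the standard way it is established; the only ingredients you invoke (the $5r$-covering lemma, the Lebesgue differentiation theorem for doubling measures, and outer regularity of finite Borel measures on metric spaces) are all legitimately available here, since a metric space carrying a doubling measure is separable.
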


\subsection{Mass transference principle}

The outstanding mass transference principle established by Beresnevich \& Velani enables one to transfer a full (Lebesgue) measure statement for a limsup set to the full Hausdorff measure statement. So, once a full (Lebesgue) measure statement is presented, the Hausdorff measure statement follows directly.

Let $\Omega$ be a locally compact metric space, and $g$ a doubling dimension function (
$\exists \ \lambda\ge 1$ such that $g(2r)\le \lambda g(r)$ for all $r>0$ small enough).
Suppose that there exist $0<c_1\le c_2<\infty$ and $r_o>0$ such that for all $x\in \Omega$ and $0<r<r_o$,
\begin{equation*}
c_1g(r)\le \mathcal{H}^{g}(B(x,r))\le c_2 g(r).
\end{equation*}
Let $f$ be a dimension function and write $B^f(x,r)$ for the ball $B(x, g^{-1} (f(r)))$.

\begin{thm}[Beresnevich \& Velani \cite{BV06}]%\label{t33}
Let $\Omega$ be a locally compact metric space, $f$ a dimension function and $g$ a doubling dimension function. Assume that $\{B_i\}_{i\in \N}$ is a sequence of balls in $\Omega$ with radii tending to $0$, and that $\frac{f(r)}{g(r)}$ increases as $r\to 0_+$.
If, for any ball $B$ in $\Omega$,
$$\mathcal{H}^{g}\Big(B \cap \limsup_{i\to \infty}B_i^f\Big)= \mathcal{H}^{g}(B);$$
then, for any ball $B$ in $\Omega$,
$$\mathcal{H}^f\Big(B \cap \limsup_{i\to \infty}B_i^g\Big)= \mathcal{H}^f(B).$$
\end{thm}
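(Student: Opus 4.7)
The plan is to prove the mass transference principle via the mass distribution principle. Fix an arbitrary ball $B\subset\Omega$; it suffices to construct a Cantor-like compact subset $K\subset B\cap\limsup_i B_i^g$ together with a probability measure $\nu$ supported on $K$ satisfying the local bound $\nu(A)\le C\, f(r(A))$ for every ball $A$ of small radius. Mass distribution then yields $\mathcal{H}^f(B\cap\limsup_i B_i^g)\ge C^{-1}>0$, and a standard density argument applied to arbitrarily small sub-balls upgrades this positivity to full $\mathcal{H}^f$-measure of $B$. The monotonicity of $f/g$ as $r\to 0_+$ forces $g^{-1}(f(r))\le r$ for all sufficiently small $r$, so $B_i^f\subset B_i^g$ eventually; this is the geometric content that makes the transference from the $B_i^f$-hypothesis to the $B_i^g$-conclusion possible at all.

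The Cantor-like set is produced inductively. Given a finite disjoint family $\mathcal{J}_n$ of balls of the form $B_i^f$ contained in $B$, apply the hypothesis $\mathcal{H}^g(B'\cap\limsup_j B_j^f)=\mathcal{H}^g(B')$ inside each parent $B'\in\mathcal{J}_n$, combined with a Vitali-type disjointification, to produce a further finite family $\mathcal{J}_{n+1}$ whose balls $B_j^f$ have indices $j$ drawn from a sufficiently late tail and whose total $\mathcal{H}^g$-mass inside each parent is at least a fixed proportion $\kappa\,\mathcal{H}^g(B')$. Setting $K=\bigcap_n\bigcup_{B'\in\mathcal{J}_n}\overline{B'}$, any point of $K$ lies in $B_j^f\subset B_j^g$ for arbitrarily large $j$, so $K\subset\limsup_i B_i^g$. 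The measure $\nu$ is defined by assigning to each $B_i^f\in\mathcal{J}_n$ a mass proportional to $f(r_i)$, renormalised inside its parent from $\mathcal{J}_{n-1}$, and passed to the natural projective limit probability on $K$.

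The main obstacle is the verification of $\nu(A)\le C\, f(r(A))$ for an arbitrary small ball $A$. One splits into cases according to where $r(A)$ sits relative to the radii at levels $n$ and $n+1$: the upper Ahlfors bound $\mathcal{H}^g(B(x,r))\le c_2 g(r)$ controls how many level-$(n+1)$ descendants $A$ can meet, while the lower bound $\mathcal{H}^g(B(x,r))\ge c_1 g(r)$ ensures room for the packing constructed inside each parent ball. The monotonicity of $f/g$ is then used to compare $f$ at the two scales $r_i$ and $g^{-1}(f(r_i))$, which is exactly what converts the $g$-volume estimates into the required $f$-mass bound. The delicacy is in choosing the tails of the index sequence at each level small enough that $\kappa$ is uniform in $n$ and the chain of inequalities telescopes back to $f(r(A))$ without accumulating logarithmic loss or drifting constants; this is the step where the precise definition $B^f(x,r)=B(x,g^{-1}(f(r)))$ and all of the hypotheses — doubling of $g$, Ahlfors regularity of $\mathcal{H}^g$, and monotonicity of $f/g$ — combine into a single clean estimate, after which the mass distribution principle finishes the proof.
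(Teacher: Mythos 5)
This theorem is not proved in the paper at all: it is the mass transference principle, quoted verbatim from Beresnevich--Velani \cite{BV06} and used as a black box to pass from the $\mu$-measure statement to the Hausdorff measure statement. So there is no in-paper proof to compare against; I can only assess your sketch against the known argument of \cite{BV06}, whose architecture (Cantor-type subset plus the mass distribution principle, with the hypothesis and a Vitali-type covering lemma supplying a positive proportion of disjoint balls inside each parent) you have correctly identified.

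There is, however, a genuine error at the heart of the sketch: the inclusion between $B^f$ and $B^g$ goes the other way. Since $f(r)/g(r)$ increases as $r\to 0_+$, in the only nontrivial case ($f/g\to\infty$; otherwise $f\asymp g$ and the theorem is essentially its own hypothesis) one has $f(r)\ge g(r)$ for small $r$, hence $g^{-1}(f(r))\ge r$ and $B^f(x,r)\supset B^g(x,r)=B(x,r)$: the hypothesis concerns the \emph{enlarged} balls and the conclusion the \emph{original, smaller} ones. This is not cosmetic. First, your set $K=\bigcap_n\bigcup_{B'\in\mathcal{J}_n}\overline{B'}$, built from nested families of balls $B_j^f$, lies in $\limsup_j B_j^f$, not in $\limsup_j B_j^g$; with the inclusion corrected, your justification ``$B_j^f\subset B_j^g$'' evaporates and the construction no longer lands in the target set. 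Second, assigning mass $\asymp f(r_i)$ to the ball $B_i^f$, whose radius is $\rho_i=g^{-1}(f(r_i))$, gives $\nu(B_i^f)\asymp g(\rho_i)$, so the local estimate you would obtain is $\nu(A)\ll g(r(A))$, not the required $\nu(A)\ll f(r(A))$. The missing idea --- which is precisely the ``transference'' --- is that after selecting the disjoint balls $B_i^f$ inside each parent via the hypothesis and the covering lemma, one must \emph{shrink} each of them to $B_i=B_i^g\subset B_i^f$ before descending to the next level, and place the mass $\asymp\mathcal{H}^g(B_i^f)\asymp f(r_i)$ on the small ball of radius $r_i$; only then does the ratio mass/$f(\text{radius})$ stay bounded and does $K$ sit inside $\limsup_i B_i^g$. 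Two smaller gaps: when $\mathcal{H}^f(B)=\infty$ (the typical case) a single positive lower bound does not suffice --- one needs either lower bounds that can be made arbitrarily large (the parameter $\eta$ in \cite{BV06}) or a uniform-constant argument over infinitely many disjoint sub-balls; and the uniform mass bound across scales requires the additional step of splitting each selected sub-collection into groups of comparable $\mathcal{H}^g$-content, which your ``choose the tails small enough'' remark does not capture.
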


So, the Hausdorff measure result for the set $\mathcal{W}_{{\text{int}}, \K}(\psi)$ in Theorem \ref{t1} is just a consequence of its $\mu$-measure result.

%Let $\K$ be the triadic Cantor set. Define the intrinsic well approximable set of points on $\K$ as $$
%W_{\K}(\psi)=\Big\{x\in \K: |x-p/q|<\psi(q), \ {\text{for infinitely many}}\ p/q\in \K\Big\}
%$$ where $\psi:\N\to \R^+$ is a positive function.
%\begin{thm}
%Assume that $\psi$ is non-increasing. Then $$
%\sum_{n=1}^{\infty}n\Big(3^n\psi(3^n)\Big)^{\gamma}=\infty \Longrightarrow \mu(W_{\K}(\psi))=1
%$$ where $\mu$ is the standard Cantor measure supported on $\K$ and $\gamma=\log 2/\log 3$ the dimension of $\K$.
%\end{thm}

\section{Convergence part}

Recall the set we are considering: $$
\mathcal{W}_{{\text{int}}, \K}(\psi):=\Big\{x\in \K: |x-p/q|<\psi(q_{{\text{int}}}), \ {\text{i.m.}}\ p/q\in \K\Big\}.
$$

We classify the rational numbers $p/q$ according to their prelength and period: for each $n\ge 1$, define $$
\mathcal{T}_n=\Big\{
p/q\in \K: \mathcal{L}(p/q)=\ell, \mathcal{P}(p/q)=m, \ \ell+m=n
\Big\}.
$$ Note that for each $p/q\in \mathcal{T}_n$, $$
3^{n-1}\le q_{\text{int}}=3^{\ell}(3^m-1)\le 3^n,
$$ and moreover \begin{align*}
\sharp \mathcal{T}_n&=\sum_{\ell+m=n}\sharp\Big\{p/q=[\ep_1,\cdots, \ep_{\ell}, (w_1,\cdots, w_m)^{\infty}]: \mathcal{L}(p/q)=\ell, \mathcal{P}(p/q)=m\Big\}\\
&\le \sum_{\ell+m=n}2^{\ell+m}=n2^n.
\end{align*}

Since $\psi$ is non-increasing and $q_{{\text{int}}}\ge 3^{n-1}$ for $p/q\in \Gamma_n$, one has \begin{align*}
  \mathcal{W}_{{\text{int}}, \K}(\psi)\subset \Big\{x\in \K: |x-p/q|<\psi(3^{n-1}),\ p/q\in \mathcal{T}_n, \ {\text{i.m.}}\ n\in \N\Big\}.
\end{align*}
Thus by the convergence part of the Borel-Cantelli lemma, we will have $\mu(\mathcal{W}_{{\text{int}}, \K}(\psi))=0$ if\begin{align*}
\sum_{n=1}^{\infty}\sum_{p/q\in \mathcal{T}_n}\mu\Big(B(p/q, \psi(3^{n-1}))\Big)\le \sum_{n=1}^{\infty}n\cdot 2^n\Big(\psi(3^{n-1})\Big)^{\gamma}<\infty.
\end{align*} This is true since by the monotonicity of $\psi$, $$
\sum_{n=1}^{\infty}n\cdot 2^n\Big(\psi(3^{n-1})\Big)^{\gamma}<\infty\Longleftrightarrow \sum_{n=1}^{\infty}n\Big(3^n\psi(3^{n})\Big)^{\gamma}<\infty.
$$ This proves the convergence part of Theorem \ref{t1}.

\section{A Counting lemma}
In this section, we will present a large collection of well separated rational numbers in $\K$ with restricted denominators which is crucial for the divergence theory of  $\mathcal{W}_{{\text{int}}, \K}(\psi)$.

\subsection{A subset of $\mathcal{W}_{{\text{int}}, \K}(\psi)$}\

For a rational number $p/q\in \K$ with prelength $\mathcal{L}(p/q)=\ell$ and period $\mathcal{P}(p/q)=m$, one has that $$
q_{\text{int}}=3^{\ell}(3^m-1).
$$ Formally, $p/q$ can be written as other ultimately periodic series, saying, $$
p/q=[\ep_1,\cdots, \ep_{\ell'}, (w_1,\cdots, w_{m'})^{\infty}].
$$ By the minimality of $\ell, m$, one knows that $$
q_{\text{int}}\le 3^{\ell'+m'}.
$$ So by monotonicity of $\psi$, we have the following subset of $\mathcal{W}_{{\text{int}}, \K}(\psi)$:\begin{align*}
 \widetilde{\mathcal{W}}:=\Big\{x\in \K: \big|x-[\ep_1,&\cdots, \ep_{\ell}, (w_1,\cdots, w_{m})^{\infty}]\big|<\psi(3^{\ell+m}), \\
 &\ep_i, w_j\in \{0,2\},\  1\le i\le \ell, 1\le j\le m,\ {\text{i.m.}}\ (\ell, m)\in \Z_{\ge 0}^2\Big\}.
\end{align*}
In other words, without the restrictions that $\ell$ and $m$ must be the prelength and period of a rational number respectively, one still get a subset of subset (i.e. $\widetilde{\mathcal{W}}$) of $\mathcal{W}_{{\text{int}}, \K}(\psi)$.
In the following, we focus on the $\mu$-measure of $\widetilde{\mathcal{W}}$.

\subsection{Rational numbers in $\K$}\

We give a mechanism to generate all the rational numbers in $\K$. For each block of digits $(\ep_1,\cdots, \ep_n)$ with $n\ge 1$, formally, it can generate $(n+2)$ rational numbers in $\K$: \begin{itemize}
\item the endpoints of the 3-adic cylinders:
$$
[\epsilon_1,\cdots, \epsilon_n, 0^{\infty}], \ \ \ [\epsilon_1,\cdots, \epsilon_n, 2^{\infty}];
$$

\item ultimately periodic ones: $$
[\epsilon_1,\cdots,\epsilon_{\ell}, (\epsilon_{\ell+1},\cdots,\epsilon_n)^{\infty}],\ 0\le \ell<n.
$$
\end{itemize}
This gives all rational numbers by union over $(\ep_1,\cdots, \ep_n)$ and $n\ge 1$, moreover a classification of them.

To study the measure of $\widetilde{\mathcal{W}}$, we hope that the rational numbers $$[\epsilon_1,\cdots,\epsilon_{\ell}, (w_{1},\cdots,w_m)^{\infty}]$$
are sufficiently well distributed. But for the rational numbers given above,
some of them may be the same, and some of them may be sufficiently close. So, we need seek a subfamily of well spaced rational numbers and hope that this subfamily can contain as many as possible rational numbers.

Denote $\mathcal{C}_t(\ep_1,\cdots,\ep_n)$ the number of different subwords of length $t$ appearing in $(\ep_1,\cdots,\ep_n)$, i.e. its complexity. Let $c_1=1/4, c_2=1/16$. Let $k_n=\lfloor\log_2 n\rfloor$ for each $n\ge 1$. Define
$$\mathcal{F}_n:=\Big\{(\ep_1,\cdots,\ep_n)\in \{0,2\}^n: \mathcal{C}_{k_n}(\ep_1,\cdots,\ep_n)\ge c_1 n\Big\}.$$
\begin{lem}[A counting lemma]\label{l3}
 $$
\sharp \F_n\ge c_2\cdot 2^n.
$$
\end{lem}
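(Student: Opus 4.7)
The plan is a simple probabilistic / Markov-style argument. Draw $\epsilon = (\ep_1, \ldots, \ep_n)$ uniformly at random from $\{0,2\}^n$; it suffices to show
$$\Pr\big[\mathcal{C}_{k_n}(\epsilon) \ge c_1 n\big] \ge c_2.$$
The key random variable is the number of colliding pairs of length-$k_n$ windows: writing $N := n - k_n + 1$, set
$$Y(\epsilon) := \#\big\{(i,j) : 1 \le i < j \le N,\ \ep_i \cdots \ep_{i+k_n-1} = \ep_j \cdots \ep_{j+k_n-1}\big\}.$$

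The first step is a uniform estimate: for every $1 \le i < j \le N$, the probability that the two windows agree equals $2^{-k_n}$. For non-overlapping pairs ($j - i \ge k_n$) this is immediate by independence of the relevant digits. The overlapping case ($d := j - i < k_n$) requires a short period argument: agreement forces the substring $\ep_i \cdots \ep_{i + k_n + d - 1}$ of length $k_n + d$ to be $d$-periodic, and there are exactly $2^d$ such words out of $2^{k_n + d}$, again yielding probability $2^{-k_n}$. Since $k_n = \lfloor \log_2 n \rfloor$ gives $2^{k_n} \ge (n+1)/2$, linearity of expectation produces
$$\mathbf{E}[Y] \;=\; \binom{N}{2} \cdot 2^{-k_n} \;<\; n.$$

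For the second step, if $\mathcal{C}_{k_n}(\epsilon) = c$, grouping the $N$ window positions by the subword they realise (into multiplicities $m_1, \ldots, m_c$ summing to $N$) and applying convexity of $\binom{\cdot}{2}$ yields
$$Y(\epsilon) \;=\; \sum_{s=1}^c \binom{m_s}{2} \;\ge\; c \binom{N/c}{2} \;=\; \frac{N^2}{2c} - \frac{N}{2}.$$
Hence the bad event $\{\mathcal{C}_{k_n}(\epsilon) < c_1 n = n/4\}$ is contained in $\{Y(\epsilon) > 2N^2/n - N/2\}$, and the bound $N \ge n - \log_2 n$ forces this right-hand threshold to be at least $3n/2 - O(\log n)$. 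Markov's inequality now gives
$$\Pr\big[\mathcal{C}_{k_n}(\epsilon) < c_1 n\big] \;\le\; \frac{\mathbf{E}[Y]}{3n/2 - O(\log n)} \;\longrightarrow\; \tfrac{2}{3},$$
which is comfortably below $15/16 = 1 - c_2$ for all sufficiently large $n$. The finitely many small values of $n$ for which the asymptotic slack $O(\log n)$ has not yet been absorbed can be handled by direct inspection, since in that regime the only words of complexity below $c_1 n$ are the few (essentially constant) ones, and these form a negligible fraction of $2^n$.

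The main obstacle I foresee is precisely the moment computation for overlapping windows: the clean identity $\Pr[\text{agreement}] = 2^{-k_n}$, which holds independently of the overlap distance $d$, is what ultimately makes $\mathbf{E}[Y]$ small enough for Markov to bite, and the periodicity argument backing it is the only part of the proof that requires genuine thought. Once that is in hand, the convexity step giving the quadratic lower bound on $Y$ and the final Markov estimate are entirely routine.
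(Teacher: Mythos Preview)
Your proof is correct and follows essentially the same route as the paper: both compute the expected number of collisions among length-$k_n$ windows (your $Y$ and the paper's $\Sigma_k^n=\sum_\xi |X_1\cdots X_n|_\xi^2$ are affinely related via $\Sigma_k^n=2Y+N$), lower-bound this quantity on the low-complexity event by convexity, and finish with a Markov-type estimate. Your periodicity argument for the overlapping-window case is in fact a detail the paper asserts without justification, so your write-up is slightly more complete there.
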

\begin{proof}
We prove this by using a probability model. Write $k=k_n$. For a finite word $(x_1 x_2\cdots x_n)\in \{0,2\}^n$ and  $\xi\in \{0,2\}^k$, let $$
|x_1x_2\cdots x_n|_{\xi}=\sharp\{i: 0\le i\le n-k, x_{i+1}x_{i+2}\cdots x_{i+k}=\xi\}
$$ and $$
\Sigma_{k}^n(x_1x_2\cdots x_n)=\sum_{\xi\in \{0,2\}^k}|x_1x_2\cdots x_n|^2_{\xi}.
$$

Let $X_1,X_2,\cdots$ be i.i.d. random variables with the distribution $P(X_i=0)=P(X_i=2)=1/2$. Write $$
\Sigma_{k}^n=\sum_{\xi\in \{0,2\}^k}|X_1X_2\cdots X_n|^2_{\xi}.
$$

At first, we estimate the expectation of the random variable $\Sigma_{k}^n$:\begin{align*}
\mathbb{E}(\Sigma_k^n)&=\sum_{\xi\in \{0,2\}^k}\mathbb{E}\Big[|X_1X_2\cdots X_n|^2_{\xi}\Big]\\
&=\sum_{\xi\in \{0,2\}^k}\mathbb{E}\left[\left(\sum_{i=0}^{n-k}\1 _{\xi}(X_{i+1}X_{i+2}\cdots X_{i+k})\right)^2\right]\\
&=\sum_{\xi\in \{0,2\}^k}\mathbb{E}\left[\sum_{i,j=0}^{n-k}\1_{\xi}(X_{i+1}X_{i+2}\cdots X_{i+k})\cdot \1_{\xi}(X_{j+1}X_{j+2}\cdots X_{j+k})\right]\\
&=\sum_{i,j=0}^{n-k}\mathbb{E}\left[\sum_{\xi\in \{0,2\}^k}\1_{\xi}(X_{i+1}X_{i+2}\cdots X_{i+k})\cdot \1_{\xi}(X_{j+1}X_{j+2}\cdots X_{j+k})\right]\\
&=\sum_{i,j=0}^{n-k}\mathbb{E}\left[\1_{X_{i+1}X_{i+2}\cdots X_{i+k}=X_{j+1}X_{j+2}\cdots X_{j+k}}\right].
\end{align*}
Since \begin{align*}
  P\Big(X_{i+1}X_{i+2}\cdots X_{i+k}=X_{j+1}X_{j+2}\cdots X_{j+k}\Big)=\left\{
                                                                 \begin{array}{ll}
                                                                   1, & \hbox{when $i=j$;} \\
                                                                   2^{-k}, & \hbox{when $i\ne j$,}
                                                                 \end{array}
                                                               \right.
\end{align*} we have that \begin{align*}
\mathbb{E}(\Sigma_k^n)&=\sum_{i=0}^{n-k}1+\sum_{i,j=0, i\ne j}^{n-k}2^{-k}=(n-k+1)+(n-k)(n-k+1)2^{-k}\\
&\le n+n^2\cdot 2n^{-1}\le 3n.
\end{align*}

On the other hand, let $B$ be the event that $$
\sharp\Big\{\xi\in \{0,2\}^k: |X_1X_2\cdots X_n|_{\xi}\ge 1\Big\}\le n/4:=\ell,
$$ i.e. the event that the number of different subwords of $X_1X_2\cdots X_n$ is less than $n/4$. We will show that the probability of $B$ is strictly less than 1. This would yield our desired result, since there is a positive probability that the number of different subwords of $X_1X_2\cdots X_n$ is larger than $n/4$.

Notice that \begin{align*}
\mathbb{E}(\Sigma_k^n)&\ge \mathbb{E}\left[\sum_{\xi\in \{0,2\}^k}|X_1X_2\cdots X_n|^2_{\xi}\ \bigg|B \right]P(B)\\
&\ge P(B)\cdot\min\{z_1^2+\cdots+z_{\ell}^2: z_1+\cdots+z_{\ell}=n-k+1\}\\
&=P(B)\cdot\frac{(n-k+1)^2}{\ell}\ge P(B)\cdot\frac{4(n-k+1)^2}{n}\\
&\ge P(B)\cdot\frac{4\cdot 4/5 \cdot n^2}{n}=P(B)\cdot\frac{16n}{5}.
\end{align*} Thus, it follows that
$$
P(B)\le \frac{15}{16}<1.
$$ Thus, the
number of subwords of length $k$ in $X_1 X_2 \cdots X_n$ is larger than $n/4$ with a
positive probability.
\end{proof}

%\section{Rational points in $\K$}
%
%We emphasis that all the digits $\epsilon$ appearing below are coming from the alphabet $\{0,2\}$.
%
%It is trivial that every rational has a finite or ultimate periodic 3-adic expansion. So, there are two types rational points in $\K$
%\begin{itemize}
%\item the endpoints of the 3-adic cylinders:
%$$
%\sum_{i=1}^n\frac{\epsilon_i}{3^i}, \ \sum_{i=1}^n\frac{\epsilon_i}{3^i}+\frac{1}{3^n}
%$$
%
%
%\item ultimately periodic: $$
%[\epsilon_1,\cdots,\epsilon_{\ell}(\epsilon_{\ell+1},\cdots,\epsilon_n)^{\infty}].
%$$
%\end{itemize}
%
%Roughly speaking, for every finite word $(w_1,\cdots,w_n)$, it can generate two types of rational in $\K$:
%\begin{itemize}
%\item the two endpoints:
%$$
%\sum_{i=1}^n\frac{w_i}{3^i}, \ \sum_{i=1}^n\frac{w_i}{3^i}+\frac{1}{3^n}
%$$
%
%
%\item (formally) $n$ ultimately periodic rational points: $$
%[w_1,\cdots,w_{\ell}(w_{\ell+1},\cdots,w_n)^{\infty}], 0\le \ell<n.
%$$
%\end{itemize}
%
%It is clear that $$
%[w_1,\cdots,w_{\ell}(w_{\ell+1},\cdots,w_n)^{\infty}]=\frac{3^n[w_1,\cdots,w_n]-3^{\ell}[w_1,\cdots,w_{\ell}]}{3^n-3^{\ell}}.
%$$ Let $$
%p=3^n[w_1,\cdots,w_n]-3^{\ell}[w_1,\cdots,w_{\ell}], \ q=3^n-3^{\ell}.
%$$ So, $$
%3^{n-1}<q\le 3^n, \ [w_1,\cdots,w_{\ell}(w_{\ell+1},\cdots,w_n)^{\infty}]=p/q,
%$$ but $p/q$ may not be in their reduced form.
%\vskip 20pt

Choose a sub-collection of $\F_n$ with cardinality $c_2\cdot 2^n$ and still denote it by $\F_n$. For each $(w_1,\cdots,w_n)\in \F_n$, let $$
\mathcal{G}(w_1,\cdots,w_n)=\Big\{(w_{\ell_i+1},\cdots,w_{\ell_i+k_n}):1\le i\le c_1n\Big\}
$$ be a sub-collection of different sub-words of $(w_1,\cdots,w_n)$ of length $k_n$.

We will use Erd\"{o}s-Chung inequality locally. So, fix a cylinder $B=I_{t}(\ep_1,\cdots, \ep_t)$. For each $(w_1,\cdots,w_n)\in \F_n$,
 denote by $\mathcal{P}_B(w_1,\cdots,w_n)$ a collection of rational points in $\K$ as
\begin{equation*}%\label{f7}
\mathcal{P}_B(w_1,\cdots,w_n)=\Big\{[\ep_1,\cdots,\ep_t, w_1,\cdots, w_{\ell_i}, (w_{\ell_i+1},\cdots,w_n)^{\infty}]: 1\le i\le c_1 n\Big\}.
\end{equation*}
%Fix a finite word $(\ep_1,\cdots, \ep_t)$ or equivalently a ball $B=[\ep_1,\cdots,\ep_t]$. For each $(w_1,\cdots,w_n)\in \F_n(k)$, denote by
%$\mathcal{P}_B(w_1,\cdots,w_n)$ a collection of rational points in $K$ as $$
%\mathcal{P}_B(w_1,\cdots,w_n)=\Big\{[\ep_1,\cdots,\ep_t, w_1,\cdots, w_{\ell_i}, (w_{\ell_i+1},\cdots,w_n)^{\infty}]: 1\le i\le c_1 n\Big\}.
%$$
%Still, we let $$
%p_i=3^{t+n}[\ep_1,\cdots,ep_t,w_1,\cdots,w_n]-3^{t+\ell_i}[\ep_1,\cdots,\ep_t,w_1,\cdots,w_{\ell}], \ q_i=3^{t+n}-3^{t+\ell}.
%$$ So, $$
%3^{t+n-1}<q_i\le 3^{t+n}, \ [\ep_1,\cdots,\ep_t,w_1,\cdots,w_{\ell}(w_{\ell+1},\cdots,w_n)^{\infty}]=p_i/q_i,
%$$ but $p_i/q_i$ may not be in their reduced form.

By Lemma \ref{l1}, we have the following observations: \begin{enumerate}
\item for any two different words $(w_1,\cdots,w_n)$ and $(w_1',\cdots, w_n')$,
$$
{\text{dist}}\Big(\P_B(w_1,\cdots,w_n), \P_B(w'_1,\cdots,w_n')\Big)\ge 3^{-n-t};
$$

\item for any two different points $p_i/q_i$ and $p_{i'}/q_{i'}$ in $\P_B(w_1,\cdots, w_n)$, $$
|p_i/q_i-p_{i'}/q_{i'}|\ge \frac{1}{3^{t+n+k_n}}.
$$

\item for any $p/q\in \P_B(w_1,\cdots,w_n)$, $$
q_{\text{int}}\le 3^{t+n}.
$$

\end{enumerate}
\begin{rem}
By the definition of the complexity of a word, for $k<k'$, one observes that $$
\mathcal{C}_{k'}(w_1,\cdots, w_n)\ge \mathcal{C}_k(w_1,\cdots, w_n)-(k'-k).
$$ Thus Lemma \ref{l3} is still true if we replace $k_n=\lfloor \log_2 n\rfloor$ by any $k'$ (small compared with $n$), say $k<k'\le n/8$. But the choice $k_n=\lfloor \log_2 n\rfloor$ is essential in the argument for the divergence theory (see (ii) in estimation the correlation $\mu(E_m\cap E_n)$ below).
\end{rem}

\section{Divergence part}

Recall that $B=I_t(\ep_1,\cdots,\ep_t)$ be a fixed ball. We assume that \begin{equation}\label{f1}
\psi(3^{n+t})\le \frac{1}{4}\cdot \frac{1}{3^{n+t+k_n}}, \ {\text{for all}}\ n\gg 1.
\end{equation} Otherwise, let $$
\psi'(3^{n+t})=\min\Big\{\psi(3^{n+t}), \ \frac{1}{4}\cdot\frac{1}{3^{n+t+k_n}}\Big\}.
$$ Note that for those $n$ with $\psi(3^{n+t})\ge \frac{1}{4\cdot 3^{n+t+k_n}}$, one has
$$
n \Big(3^{n+t}\psi'(3^{n+t})\Big)^{\gamma}=n \Big(3^{n+t}\cdot \frac{1}{4}\cdot \frac{1}{3^{n+t+k_n}}\Big)^{\gamma}=n\Big(\frac{1}{4}\cdot 3^{-k_n}\Big)^{\gamma}=4^{-\gamma},
$$ where for the last equality, we used that $3^{k_n\gamma}=n$. Thus it follows that $$
\sum_{n\ge 1}n \Big(3^{n+t}\psi(3^{n+t})\Big)^{\gamma}=\infty \Longrightarrow \sum_{n\ge 1}n \Big(3^{n+t}\psi'(3^{n+t})\Big)^{\gamma}=\infty.
$$

For each $n\ge 1$, define $$
E_n=\bigcup_{(w_1,\cdots,w_n)\in \F_n}\ \bigcup_{p/q\in \P_B(w_1,\cdots,w_n)}B\Big(p/q, \psi(3^{n+t})\Big).
$$ So we have $$
\limsup_{n\to \infty}E_n\subset \widetilde{\mathcal{W}}\cap B.
$$

%\subsection{Measure of $E_n$}

The assumption (\ref{f1}) implies the balls in $E_n$ are disjoint, so \begin{align*}
\mu(E_n)&=\sum_{(w_1,\cdots,w_n)\in \F_n}\ \sum_{p/q\in \P_B(w_1,\cdots,w_n)}\mu\Big(B(p/q, \psi(3^{n+t}))\Big)\\
&\ge c_2 2^n\cdot c_1n\cdot c\Big(\psi(3^{n+t})\Big)^{\gamma}.
\end{align*}
Thus, $$
\sum_{n=1}^{\infty}\mu(E_n)\ge \sum_{n=1}^{\infty}c_2 2^n\cdot c_1n\cdot c\Big(\psi(3^{n+t})\Big)^{\gamma}=\infty.
$$

%\subsection{Measure of $E_n$}
%
%The assumption (\ref{f1}) implies the balls in $E_n$ are disjoint, so $$
%\mu(E_n)=\sum_{(w_1,\cdots,w_n)\in \F_n(k)}\sum_{p/q\in \P_B(w_1,\cdots,w_n)}(\psi(3^{n+t}))^{\gamma}\asymp n2^n (\psi(3^{n+t}))^{\gamma}.
%$$
%So, $$
%\sum_{n=1}^{\infty}\mu(E_n)\asymp\sum_{n=1}^{\infty}n2^n (\psi(3^{n+t}))^{\gamma}=\infty.
%$$

To use Chung-Erd\"{o}s inequality, we need estimate the measure of the intersection $E_m\cap E_n$ with $m<n$. Let $A_m$ be a generic ball in $E_m$, so it is of radius $\psi(3^{m+t})$. It is trivial that $$
\mu(E_m\cap E_n)=\sum_{A_m\in E_m}\mu(A_m\cap E_n).
$$
So, we pay attention to the number of balls in $E_n$ which can have non-empty intersection with $A_m$.\smallskip

Recall that by (\ref{f1}), for each ball $A_n\in E_n$, the intersection $A_n\cap \K$ is contained in a cylinder of order $n+t$. On the other hand, each cylinder of order $n+t$ can intersect at most $c_1n\le n$ balls in $E_n$.

 Three cases will be distinguished according to the comparison of the radius of $A_m$ and the gaps between the balls $A_n$ in $E_n$.

(i). $\psi(3^{m+t})\ge 3^{-(n+t)}$. In this case, the ball $A_m$ may intersect many cylinders of order $n+t$, so we count the number of these cylinders.
By a volume argument with respect to the Cantor measure $\mu$, the ball $A_m$ can intersect at most $$\frac{\mu(A_m)}{\mu(I_{n+t})}+2\le 2c^{-1}\Big(3^{n+t}\cdot \psi(3^{m+t})\Big)^{\gamma}
$$ cylinders of order $(n+t)$. So $A_m$ can intersect at most
 $$n\cdot 2c^{-1}\Big(3^{n+t}\cdot \psi(3^{m+t})\Big)^{\gamma}=2c^{-1}\cdot n2^{n+t}\cdot \Big(\psi(3^{m+t})\Big)^{\gamma}$$
  balls $A_n$ in $E_n$.
 Thus \begin{align*}
\mu(E_m\cap E_n)&\le \sum_{A_m\in E_m}\sum_{A_n\in E_n, A_n\cap A_m\ne \emptyset}\mu(A_n)\\
&\le \sum_{A_m\in E_m} 2c^{-1}\cdot n2^{n+t}\Big(\psi(3^{m+t})\Big)^{\gamma}\cdot c^{-1}\Big(\psi(3^{n+t})\Big)^{\gamma}\\
&\le m2^m \cdot 2c^{-2}\cdot  n2^{n+t}\Big(\psi(3^{m+t})\Big)^{\gamma}\cdot \Big(\psi(3^{n+t})\Big)^{\gamma}\\
&=2c^{-2}\cdot m2^m\Big(\psi(3^{m+t})\Big)^{\gamma}\cdot n2^n\Big(\psi(3^{n+t})\Big)^{\gamma}\cdot 2^{t}\\
&\le c_3\cdot \frac{\mu(E_m)\cdot \mu(E_n)}{\mu(B)},
\end{align*} for some absolute constant $c_3>0$. Note that $\mu(B)=2^{-t}$.
\smallskip

(ii). $\frac{1}{2}\cdot 3^{-n-t-k_n}\le \psi(3^{m+t})< 3^{-(n+t)}$.
In this case, the ball $A_m$ can intersect at most one cylinder of order $(n+t)$. By Observation (2), the centers of the balls of $E_n$ intersecting a same cylinder of order $n+t$ are $3^{-n-t-k_n}$-separated which is larger than twice of the radius of a ball in $E_n$ by (\ref{f1}). So for each ball $A_n\in E_n$ which can intersect $A_m$, we thicken it to a ball with the same center and radius $\frac{1}{2}\cdot 3^{-n-t-k_n}$. Then these thicken balls are still disjoint, centered at $\K$ and contained in $4A_m$. Still by a volume argument with respect to the Cantor measure $\mu$, the ball $A_m$ can intersect at most $$ c^{-2}8^{\gamma}\cdot
\Big(\psi(3^{m+t})\cdot 3^{n+t+k_n}\Big)^{\gamma}$$ balls in $E_n$. Thus, by noting that $3^{k_n\gamma}=n$,\begin{align*}
\mu(E_m\cap E_n)
&\le m2^m\cdot c^{-2}8^{\gamma}\cdot
\Big(\psi(3^{m+t})\cdot 3^{n+t+k_n}\Big)^{\gamma}\cdot c^{-1}\Big(\psi(3^{n+t})\Big)^{\gamma}\\
&=c^{-3}8^{\gamma}\cdot m2^m \Big(\psi(3^{m+t})\Big)^{\gamma}\cdot n2^n\Big(\psi(3^{n+t})\Big)^{\gamma}\cdot 2^{t}\\
&\le c_3\cdot \frac{\mu(E_m)\cdot \mu(E_n)}{\mu(B)}.
\end{align*} \smallskip

(iii). $\psi(3^{m+t})< \frac{1}{2}\cdot 3^{-n-t-k_n}$.
In this case, by Observation (1) (2), the ball $A_m$ can intersect at most one ball of $E_n$. Thus\begin{align*}
\mu(E_m\cap E_n)
&\le c^{-1}\cdot m2^m \cdot \Big(\psi(3^{n+t})\Big)^{\gamma}.
\end{align*}

In a summary, we have shown that for $m<n$, $$
\mu(E_m\cap E_n)\le c_3\frac{\mu(E_m)\mu(E_n)}{\mu(B)}+c^{-1}\cdot m2^m \cdot \Big(\psi(3^{n+t})\Big)^{\gamma}.
$$
So, \begin{align*}
\sum_{1\le m\ne n\le N}\mu(E_m\cap E_n)&\le c_3\cdot \frac{1}{\mu(B)}\Big(\sum_{1\le m\le N}\mu(E_m)\Big)^2+2c^{-1}\sum_{n=1}^N\sum_{m=1}^n m2^m \Big(\psi(3^{n+t})\Big)^{\gamma}\\
& \le c_3\cdot \frac{1}{\mu(B)}\Big(\sum_{1\le m\le N}\mu(E_m)\Big)^2+4c^{-1}\sum_{n=1}^N n2^n \Big(\psi(3^{n+t})\Big)^{\gamma}\\
&\le c_3\frac{1}{\mu(B)}\Big(\sum_{1\le m\le N}\mu(E_m)\Big)^2+c_3\sum_{n=1}^N \mu(E_n).
\end{align*}
Therefore, by Chung-Erd\"{o}s inequality, we conclude that $$
\mu(\widetilde{\mathcal{W}}\cap B)\ge \mu(\limsup_{n\to \infty}E_n)\ge c_3^{-1}\mu(B).
$$

Finally any ball $B=B(x,r)$ with $x\in \K$ can contain a cylinder of order $\ell$ and contained in a cylinder of order $\ell-1$. So by the doubling property of $\mu$ (see Lemma \ref{l1}), we conclude that for any ball $B(x,r)$,$$
\mu(\widetilde{\mathcal{W}}\cap B)%\ge \mu(\limsup_{n\to \infty}E_n)
\ge 2^{-1}c_3^{-1}\cdot \mu(B).
$$  Thus the full measure property of $\widetilde{\mathcal{W}}$ follows by Lemma \ref{l2}.

\section{More words on Mahler's first question}

Corollary \ref{c1} gives the divergence theory of Mahler's first question, however, as mentioned before, for a rational number $p/q\in \K$, we donot know the greatest common divisor of $p_{\text{int}}$ and $q_{\text{int}}$.
This is the main difficulty for the convergence theory.

In fact, the convergence theory is highly related to the number of {\em reduced} rational numbers in $\K$ with a prescribed range of the denominators. More precisely, define $$
\mathcal{N}_n=\Big\{p/q\in \K: p/q\ {\text{reduced}},\ 3^{n-1}<q\le 3^n\Big\},
$$ and $$
\mathcal{P}_m:=\Big\{p'/q'\in \K: {\text{purely periodic, and reduced}}, 3^{m-1}<q'\le 3^m\Big\}.
$$
The expected goal is to estimate the cardinality of $\mathcal{N}_n$, which has close relations to $\mathcal{P}_m$ (Lemma \ref{l6}). So effective estimations on $\sharp \mathcal{N}_n$ or $\sharp \mathcal{P}_m$ will lead to the convergence theory of Mahler's question.

 Now we give some simple observations on $\mathcal{N}_n$.

Let $p/q\in \K$ with $\mathcal{L}(p/q)=\ell$ and $\mathcal{P}(p/q)=m$. So we write it as $$
p/q=[\ep_1,\cdots,\ep_{\ell}, (\ep_{\ell+1},\cdots,\ep_{\ell+m})^{\infty}].
$$
Then by Definition \ref{d1}, %\label{f6}
$q_{\text{int}}={3^{\ell}(3^m-1)}$ and \begin{align}
p_{\text{int}}&={3^{\ell+m}[\epsilon_1,\cdots,\epsilon_{\ell}, \ep_{\ell+1},\cdots,\ep_{\ell+m}]-3^{\ell}[\epsilon_1,\cdots,\epsilon_{\ell}]}\label{f8}\\
&=(3^m-1)3^{\ell}[\ep_1,\cdots, \ep_{\ell}]+3^m[\ep_{\ell+1},\cdots,\ep_{\ell+m}].\label{f9}
\end{align}
%For each rational number $p/q\in \K$, its $3$-adic expansion can be written as $$
%p/q=[\epsilon_1,\cdots, \epsilon_{\ell}, (\epsilon_{\ell+1},\cdots, \epsilon_{\ell+m})^{\infty}].
%$$ Let $\ell, m$ be the {\em smallest integer} in the sense for any $\ell', m'$ with $$
%\min\{\ell' ,m'\}<\min\{\ell, m\},
%$$ one has  $$
%[\epsilon_1,\cdots, \epsilon_{\ell'}, (\epsilon_{\ell'+1},\cdots, \epsilon_{\ell'+m'})^{\infty}]\ne [\epsilon_1,\cdots, \epsilon_{\ell},
%(\epsilon_{\ell+1},\cdots, \epsilon_{\ell+m})^{\infty}]
%$$
%So, for each $p/q\in \K$, we define
%$$
%\mathcal{L}(p/q)=\ell, \ \mathcal{P}(p/q)=m.
%$$

%For each $p/q\in \mathcal{N}_n$, write $$
%p/q=[\epsilon_1,\cdots, \epsilon_{\ell}, (\epsilon_{\ell+1},\cdots, \epsilon_{\ell+m})^{\infty}].
%$$ with $\ell, m$ smallest.
\begin{lem}\label{l5} Use $gcd$ to denote the greatest common divisor. Then
$$gcd(p_{\text{int}},\ 3^{\ell})=1, \ \ \ gcd(p_{\text{int}},\ q_{\text{int}})=gcd(3^m[w_1,\cdots, w_m],\ 3^m-1).$$
\end{lem}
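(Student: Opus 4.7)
I would handle the two equalities separately. For $\gcd(p_{\text{int}}, 3^\ell) = 1$, the case $\ell = 0$ is trivial, so assume $\ell \geq 1$ and aim to show $3 \nmid p_{\text{int}}$. Writing $w_j = \ep_{\ell+j}$ and reducing (\ref{f9}) modulo $3$, the integer $3^\ell[\ep_1,\ldots,\ep_\ell]$ collapses to $\ep_\ell$ (only the $i=\ell$ term survives), the factor $(3^m-1)$ becomes $-1$, and $3^m[w_1,\ldots,w_m]$ collapses to $w_m$. This gives
$$p_{\text{int}} \equiv w_m - \ep_\ell \pmod{3}.$$

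The crux is then showing $\ep_\ell \neq w_m$, and this is where the minimality of the prelength $\ell$ enters. Suppose for contradiction $\ep_\ell = w_m = \ep_{\ell+m}$. Combined with the periodicity $\ep_{\ell+j} = \ep_{\ell+m+j}$ for $j \geq 1$ (holding because $\{\ep_i\}_{i>\ell}$ has period $m$), one obtains $\ep_{\ell+j} = \ep_{\ell+m+j}$ for all $j \geq 0$, making the tail $\{\ep_i\}_{i > \ell - 1}$ itself purely periodic with period $m$, contradicting the minimality of $\ell$. Hence $\ep_\ell, w_m \in \{0,2\}$ must differ, so $w_m - \ep_\ell \in \{\pm 2\}$ is nonzero modulo $3$, proving the first claim.

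For the second equality, coprimality with $3^\ell$ combined with $q_{\text{int}} = 3^\ell(3^m - 1)$ immediately reduces the problem to computing $\gcd(p_{\text{int}}, 3^m - 1)$. Reducing (\ref{f9}) modulo $3^m - 1$ annihilates the first summand and leaves
$$p_{\text{int}} \equiv 3^m[w_1,\ldots,w_m] \pmod{3^m - 1},$$
which immediately yields the claimed identity. The only genuinely nontrivial ingredient is the minimality argument forcing $\ep_\ell \neq w_m$; everything else is direct modular arithmetic from the closed-form expression (\ref{f9}) already on the page.
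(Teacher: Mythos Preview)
Your proof is correct and follows essentially the same approach as the paper: reduce $p_{\text{int}}$ modulo $3$ to obtain $p_{\text{int}}\equiv \ep_{\ell+m}-\ep_\ell$, invoke the minimality of $\ell$ to force $\ep_\ell\neq \ep_{\ell+m}$, and then deduce the second identity from the first together with (\ref{f9}). The only cosmetic difference is that the paper reads off the residue from (\ref{f8}) rather than (\ref{f9}), which of course gives the same congruence.
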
\begin{proof}
  If $p_{\text{int}}$ is a multiple of $3$, by (\ref{f8}), one has that $$
\epsilon_{\ell}=\epsilon_{\ell+m}.
$$ As a result, $$
[\epsilon_1,\cdots, \epsilon_{\ell}, (\epsilon_{\ell+1},\cdots, \epsilon_{\ell+m})^{\infty}]=[\epsilon_1,\cdots, \epsilon_{\ell-1}, (\epsilon_{\ell},\cdots, \epsilon_{\ell+m-1})^{\infty}].
$$ This contradicts the definition of $\mathcal{L}(p/q)$. So the first assertion follows. The second assertion follows from the first one and (\ref{f9}).
\end{proof}
\begin{lem}\label{l4} For any $p/q\in \mathcal{N}_n$, one has
  $\mathcal{L}(p/q)\le n$ and $\mathcal{L}(p/q)+\mathcal{P}(p/q)\ge n$.
\end{lem}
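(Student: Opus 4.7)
The plan is to exploit the relation between the reduced fraction $p/q$ and the intrinsic form $p_{\text{int}}/q_{\text{int}}$, together with Lemma \ref{l5}. Since $p/q = p_{\text{int}}/q_{\text{int}}$ with $p/q$ in lowest terms, there exists a positive integer $d$ (namely $d = \gcd(p_{\text{int}}, q_{\text{int}})$) such that $p_{\text{int}} = d\,p$ and $q_{\text{int}} = d\,q$. Both bounds will be teased out of this identity together with the explicit formula $q_{\text{int}} = 3^{\ell}(3^m-1)$.

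For $\mathcal{L}(p/q) = \ell \le n$: I would argue that $3^{\ell}$ must actually divide $q$, not merely $q_{\text{int}}$. Indeed, the first half of Lemma \ref{l5} gives $\gcd(p_{\text{int}}, 3^{\ell}) = 1$, and since $d \mid p_{\text{int}}$ this forces $\gcd(d, 3^{\ell}) = 1$. Combined with $3^{\ell} \mid q_{\text{int}} = dq$, we conclude $3^{\ell} \mid q$. Therefore $3^{\ell} \le q \le 3^n$, which gives $\ell \le n$.

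For $\mathcal{L}(p/q) + \mathcal{P}(p/q) = \ell + m \ge n$: This one is purely a size comparison and does not require Lemma \ref{l5}. Since $d \ge 1$ we have $q \le q_{\text{int}} = 3^{\ell}(3^m-1) < 3^{\ell+m}$, while by assumption $q > 3^{n-1}$. Hence $3^{n-1} < 3^{\ell+m}$, which yields $\ell + m \ge n$.

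There is no real obstacle here; the only subtlety is recognizing that the co-primality statement in Lemma \ref{l5} is exactly what is needed to upgrade the divisibility $3^{\ell} \mid q_{\text{int}}$ to the sharper $3^{\ell} \mid q$. Once that observation is made, both inequalities follow immediately from the explicit shape of $q_{\text{int}}$ and the hypothesis $3^{n-1} < q \le 3^n$.
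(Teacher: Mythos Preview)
Your proof is correct and follows essentially the same route as the paper's own proof, which tersely records the two chains of inequalities $3^{\ell}\le q\le 3^{n}$ and $3^{n-1}<q<3^{\ell+m}$ and attributes them to Lemma~\ref{l5} and the hypothesis $p/q\in\mathcal{N}_n$. Your write-up simply makes explicit the step the paper leaves implicit, namely that $\gcd(p_{\text{int}},3^{\ell})=1$ forces $3^{\ell}\mid q$ rather than merely $3^{\ell}\mid q_{\text{int}}$; your observation that the second inequality needs only $q\le q_{\text{int}}<3^{\ell+m}$ (and not Lemma~\ref{l5}) is also accurate.
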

\begin{proof} Let $\mathcal{L}(p/q)=\ell$ and $\mathcal{P}(p/q)=m$. Then by Lemma \ref{l5} and since $p/q\in \mathcal{N}_n$, it follows that
$$3^{\ell}\le q\le 3^n,  \ \ \ 3^{n-1}<q<3^{\ell+m}.$$
%$q$ is a multiple of $3^{\ell}$. Since $q\le 3^n$, it follows that $\mathcal{L}(p/q)\le n$.
%Fix an element $p/q\in \mathcal{N}_n$ and write $$
%p/q=[\epsilon_1,\cdots, \epsilon_{\ell}, (\epsilon_{\ell+1},\cdots, \epsilon_{\ell+m})^{\infty}].
%$$ with $
%\mathcal{L}(p/q)=\ell, \ \mathcal{P}(p/q)=m.
%$ Formally, \begin{align}\label{f2}
%p/q=\frac{3^{\ell+m}[\epsilon_1,\cdots, \epsilon_{\ell+m}]-3^{\ell}[\epsilon_1,\cdots, \epsilon_{\ell}]}{3^{\ell}(3^m-1)}.
%\end{align} Note that $$
%q\Big|3^{\ell}(3^m-1), \ {\text{and}} \ q\le 3^n.
%$$ So if $\ell>n$, we must have the numerator in the right side (\ref{f2}) must be a multiple of $3$, i.e., $$
%3\Big|3^{\ell+m}[\epsilon_1,\cdots, \epsilon_{\ell+m}]-3^{\ell}[\epsilon_1,\cdots, \epsilon_{\ell}].
%$$ This implies $$
%\epsilon_{\ell}=\epsilon_{\ell+m}.
%$$ As a result, $$
%[\epsilon_1,\cdots, \epsilon_{\ell}, (\epsilon_{\ell+1},\cdots, \epsilon_{\ell+m})^{\infty}]=[\epsilon_1,\cdots, \epsilon_{\ell-1}, (\epsilon_{\ell},\cdots, \epsilon_{\ell+m-1})^{\infty}].
%$$ This contradicts the definition of $\mathcal{L}(p/q)$.
\vskip -15pt\end{proof}

%In fact, what we showed is that by the definition of $\ell, m$, $$
%\Big(3^{\ell+m}[\epsilon_1,\cdots, \epsilon_{\ell+m}]-3^{m}[\epsilon_{\ell+1},\cdots, \epsilon_{\ell+m}], 3^{\ell}\Big)=1.
%$$
In fact, Lemma \ref{l5} indicates that to understand the reduced form of $\frac{p_{\text{int}}}{q_{\text{int}}}$,
one need be clear about the reduced form of purely periodic 3-adic expansion. More precisely, we give the following estimation on $\mathcal{N}_n$. Define $$
\mathcal{P}_m:=\Big\{p'/q': {\text{purely periodic reduced}}, 3^{m-1}<q'\le 3^m\Big\}.
$$ \begin{lem}\label{l6} One has\begin{align*}
  \sharp\mathcal{N}_n\le \sum_{m\le n}2^{n-m}\sharp\mathcal{P}_{m}.
\end{align*}
\end{lem}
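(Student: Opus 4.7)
The idea is to produce an injection from $\mathcal{N}_n$ into a disjoint union whose cardinality is exactly the right-hand side, namely to assign to each reduced $p/q \in \mathcal{N}_n$ a triple $(\ell, (\epsilon_1, \ldots, \epsilon_\ell), y)$ with $\ell \in \{0, 1, \ldots, n\}$, $(\epsilon_1, \ldots, \epsilon_\ell) \in \{0,2\}^\ell$, and $y \in \mathcal{P}_{n - \ell}$. Once such a map is in place and shown to be injective, the bound follows at once from
$$\sharp \mathcal{N}_n \le \sum_{\ell = 0}^{n} 2^\ell \cdot \sharp \mathcal{P}_{n - \ell} = \sum_{m \le n} 2^{n - m} \sharp \mathcal{P}_m.$$

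To define the map, I would first write $\ell = \mathcal{L}(p/q)$ and $m = \mathcal{P}(p/q)$, so that
$$\frac{p}{q} = [\epsilon_1, \ldots, \epsilon_\ell, (\epsilon_{\ell+1}, \ldots, \epsilon_{\ell+m})^{\infty}],$$
and set $y := [(\epsilon_{\ell+1}, \ldots, \epsilon_{\ell+m})^\infty]$, the purely periodic tail, written in reduced form $p'/q'$. The key structural identity to verify is $q = 3^\ell q'$. By Lemma \ref{l5}, $\gcd(p_{\text{int}}, 3^\ell) = 1$, so the factor $3^\ell$ in $q_{\text{int}} = 3^\ell(3^m - 1)$ survives reduction; the only cancellation happens through $d := \gcd(3^m[\epsilon_{\ell+1}, \ldots, \epsilon_{\ell+m}], 3^m - 1)$, and this is precisely the gcd reducing $y = 3^m[\epsilon_{\ell+1}, \ldots, \epsilon_{\ell+m}]/(3^m - 1)$ to $p'/q'$. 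Consequently $q = 3^\ell \cdot (3^m - 1)/d = 3^\ell q'$.

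From $3^{n-1} < q = 3^\ell q' \le 3^n$ together with $\gcd(q', 3) = 1$ one reads off $3^{n-\ell-1} < q' \le 3^{n-\ell}$, which places $y \in \mathcal{P}_{n-\ell}$; Lemma \ref{l4} moreover forces $\ell \in \{0, 1, \ldots, n\}$, so the parameter $m' := n - \ell$ ranges correctly. Injectivity is transparent: the triple reconstructs $p/q$ via the formula $p/q = [\epsilon_1, \ldots, \epsilon_\ell] + 3^{-\ell} y$, and $\ell$ is canonically recovered from $p/q$ as its prelength $\mathcal{L}(p/q)$. The only non-routine step in the whole argument is the identity $q = 3^\ell q'$, and this is exactly what Lemma \ref{l5} is tailored to deliver; without the coprimality $\gcd(p_{\text{int}}, 3^\ell) = 1$, the $3^\ell$ factor could itself undergo uncontrolled cancellation and the bookkeeping by $\ell$ would break down.
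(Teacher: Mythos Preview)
Your proof is correct and follows essentially the same route as the paper: both decompose by prelength $\ell = \mathcal{L}(p/q)$, invoke Lemma~\ref{l5} to establish $q = 3^\ell q'$ for the reduced purely periodic tail $p'/q'$, place the tail in $\mathcal{P}_{n-\ell}$, and obtain the bound $\sum_{\ell\le n} 2^\ell\,\sharp\mathcal{P}_{n-\ell}$. Your explicit framing as an injection with the reconstruction formula $p/q = [\epsilon_1,\ldots,\epsilon_\ell] + 3^{-\ell}y$ is slightly more detailed than the paper's direct counting, but the substance is identical.
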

\begin{proof}
By Lemma \ref{l4}, one has
\begin{align*}
  \sharp\mathcal{N}_n&=\sharp\Big\{p/q: {\text{reduced}}\ 3^{n-1}<q\le 3^n\Big\}\\
&=\sum_{\ell\le n}\sharp\Big\{p/q=[\epsilon_1,\cdots, \epsilon_{\ell}, (w_1,\cdots, w_m)^{\infty}]: \mathcal{L}(p/q)=\ell, \mathcal{P}(p/q)=m, {\text{reduced}}\ 3^{n-1}<q\le 3^n\Big\}.
\end{align*} Let $$p/q=[\epsilon_1,\cdots, \epsilon_{\ell}, (w_1,\cdots, w_m)^{\infty}]$$ with $\mathcal{L}(p/q)=\ell, \mathcal{P}(p/q)=m$
and let $$p'/q'=[(w_1,\cdots, w_m)^{\infty}]=\frac{3^m[w_1,\cdots, w_m]}{3^m-1}$$ be in its reduced form. It is trivial that $$
q'=\frac{3^m-1}{gcd(3^m[w_1,\cdots, w_m], 3^m-1)}, \ q=\frac{(3^m-1)3^{\ell}}{gdc(p_{\text{int}},\ q_{\text{int}})}
$$
By Lemma \ref{l5}, we know that the two denominators are the same, so it follows that $$
q'=q/3^{\ell}.
$$
Thus one has \begin{align*}
  \sharp\mathcal{N}_n&\le \sum_{\ell\le n}2^{\ell}\sharp\Big\{p'/q'=[(w_1,\cdots, w_m)^{\infty}]: {\text{reduced}}\ 3^{n-1-\ell}<q'\le 3^{n-\ell}\Big\}\\
&=\sum_{\ell\le n}2^{\ell}\sharp\mathcal{P}_{n-\ell}=\sum_{m\le n}2^{n-m}\sharp\mathcal{P}_{m}.
\end{align*}% where $$
%\mathcal{P}_m:=\sharp\Big\{p'/q': {\text{pure periodic reduced}}, 3^{m-1}<q'\le 3^m\Big\}.
%$$
\vskip -15pt
\end{proof}Thus if one can show that $$
\sharp\mathcal{P}_m\ll 2^m,
$$ one would have that $$
\sharp\mathcal{N}_n\le 2^n\sum_{m\le n}\frac{\sharp\mathcal{P}_{m}}{2^{m}}\ll n2^n.
$$
%only focus on the common divisor of $$
%3^m[\ep_{\ell+1},\cdots, \ep_{\ell+m}], \ {\text{and}}\ 3^m-1.
%$$

As a corollary of Lemma \ref{l4}, by writing $$
 \mathcal{A}=\Big\{p/q\in \K:  p/q\ {\text{reduced}},\ 3^{n-1}<q\le 3^n,\ \mathcal{P}(p/q)\le M+\log_2 n\Big\},
 $$ for any fixed $M>0$, we have the following.
 \begin{cor}
   $\sharp \mathcal{A}\le 2^{M+2}\cdot n\cdot 2^n.$
 \end{cor}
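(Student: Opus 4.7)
The plan is to parametrize each $p/q \in \mathcal{A}$ by its prelength $\ell = \mathcal{L}(p/q)$, period $m = \mathcal{P}(p/q)$, and the digit block $(\epsilon_1,\ldots,\epsilon_\ell,w_1,\ldots,w_m) \in \{0,2\}^{\ell+m}$ that records its ultimately periodic $3$-adic expansion. Since the minimality of $\mathcal{L}$ and $\mathcal{P}$ forces the triple $(\ell,m,\text{digit block})$ to be uniquely associated to $p/q$, the number of $p/q \in \mathcal{A}$ with prescribed prelength $\ell$ and period $m$ is bounded by $2^{\ell+m}$.

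Next I would invoke Lemma \ref{l4}: every $p/q \in \mathcal{N}_n$ satisfies $\ell \le n$ and $\ell + m \ge n$. Combined with the defining constraint $m \le M + \log_2 n$ for $\mathcal{A} \subset \mathcal{N}_n$, the admissible pairs $(\ell,m)$ live in
\[
\{(\ell,m) : 0 \le m \le M + \log_2 n,\ \max(0,n-m) \le \ell \le n\}.
\]
In particular, for each fixed $m$ there are at most $m+1$ allowable values of $\ell$, all satisfying $\ell + m \le n + m$.

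The remaining step is a direct two-step geometric-sum estimate:
\begin{align*}
\sharp \mathcal{A} &\le \sum_{m=0}^{\lfloor M + \log_2 n\rfloor} \ \sum_{\ell=n-m}^{n} 2^{\ell+m}
\le \sum_{m=0}^{\lfloor M+\log_2 n\rfloor} 2^{n+m+1}
\le 2^{n+1} \cdot 2 \cdot 2^{M + \log_2 n}
= 2^{M+2}\cdot n\cdot 2^n,
\end{align*}
which is exactly the claimed bound.

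There is no real obstacle here: all of the substantive content has been absorbed into Lemma \ref{l4} (via Lemma \ref{l5}). The only point requiring a moment's care is confirming that the $(\ell,m)$-digit block indexing is injective on rationals with $\mathcal{L}(p/q) = \ell$ and $\mathcal{P}(p/q) = m$, which is immediate from the definition of prelength and period. The hypothesis $m \le M + \log_2 n$ is what keeps the outer geometric sum finite and produces the factor $n$.
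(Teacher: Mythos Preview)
Your proof is correct and follows essentially the same approach as the paper: parametrize by $(\ell,m,\text{digit block})$, invoke Lemma~\ref{l4} to bound $\ell\le n$, use the defining constraint $m\le M+\log_2 n$, and sum geometrically. The only difference is that you additionally retain the constraint $\ell\ge n-m$ from Lemma~\ref{l4}, whereas the paper simply drops it and sums over all $0\le\ell\le n$; your refinement does not improve the final bound, so the two arguments coincide in substance.
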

 \begin{proof} By Lemma \ref{l4},
   \begin{align*}
      \sharp\mathcal{A}&=\sharp\Big\{p/q\in \K:  p/q\ {\text{reduced}},\ 3^{n-1}<q\le 3^n, \  \mathcal{L}(p/q)\le n,\ \mathcal{P}(p/q)\le M+\log_2 n\Big\}\\
      &\le \sharp\Big\{[\epsilon_1,\cdots,\epsilon_{\ell}, (\epsilon_{\ell+1},\cdots, \epsilon_{\ell+m})^{\infty}]:  \ell\le n, \ m\le M+\log_2 n, \  \ep_i\in \{0,2\}, 1\le i\le \ell+m\Big\}\\
     % &\le \Big\{[\epsilon_1,\cdots,\epsilon_{\ell}, (\epsilon_{\ell+1},\cdots, \epsilon_{\ell+m})^{\infty}]:  \ell\le n, \ m\le M+\log_2 n\Big\}\\
      &= \sum_{\ell\le n}\ \sum_{m\le M+\log_2 n}2^{\ell+m}\\
      &\le 2^{M+2}\cdot n\cdot 2^n.
   \end{align*}
\vskip-15pt \end{proof}

 As a consequence, by writing $$
   \mathcal{W}^{(1)}(\psi)=\Big\{x\in \K: |x-p/q|<\psi(q), \ {\text{i.m.}}\ p/q\ {\text{reduced}}, p/q\in \K, \mathcal{P}(p/q)\le \log_2 \log q \Big\}
   $$we have the following,\begin{cor}$$
  \sum_{n\ge 1}n\cdot 2^n \Big(\psi(3^n)\Big)^{\gamma}<\infty \Longrightarrow \mu\Big(\mathcal{W}^{(1)}(\psi)\Big)=0.$$
 \end{cor}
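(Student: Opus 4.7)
The plan is a direct convergence Borel--Cantelli argument that piggybacks on the preceding Corollary. The key observation is that the restriction $\mathcal{P}(p/q)\le \log_2\log q$ in the definition of $\mathcal{W}^{(1)}(\psi)$ is exactly of the form handled by that Corollary once $q$ is localized to a dyadic-in-$3$ range.

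First, I would partition by the size of the denominator. For each $n\ge 1$, set
\[
\mathcal{A}_n:=\Big\{p/q\in\K:\ p/q\ \text{reduced},\ 3^{n-1}<q\le 3^n,\ \mathcal{P}(p/q)\le \log_2\log q\Big\}.
\]
If $3^{n-1}<q\le 3^n$, then $\log q\le n\log 3$, so $\log_2\log q\le \log_2 n+\log_2\log 3$. Hence, choosing $M:=\log_2\log 3$ (or any fixed upper bound of it), every $p/q\in\mathcal{A}_n$ satisfies $\mathcal{P}(p/q)\le M+\log_2 n$, and so $\mathcal{A}_n$ is contained in the set $\mathcal{A}$ of the preceding Corollary. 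Therefore $\sharp\mathcal{A}_n\le 2^{M+2}\cdot n\cdot 2^n$.

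Next I would express $\mathcal{W}^{(1)}(\psi)$ as a limsup. Since every $p/q$ appearing in the definition satisfies $3^{n-1}<q\le 3^n$ for exactly one $n$, one has
\[
\mathcal{W}^{(1)}(\psi)\subset \limsup_{n\to\infty}E_n,\qquad E_n:=\bigcup_{p/q\in\mathcal{A}_n} B\bigl(p/q,\psi(q)\bigr).
\]
By monotonicity of $\psi$ we have $\psi(q)\le \psi(3^{n-1})$ for $p/q\in\mathcal{A}_n$; by the $\gamma$-Ahlfors regularity of $\mu$ (Lemma \ref{l1}) and since each center $p/q$ lies in $\K$,
\[
\mu(E_n)\le \sharp\mathcal{A}_n\cdot c^{-1}\psi(3^{n-1})^{\gamma}\le c^{-1}\cdot 2^{M+2}\cdot n\cdot 2^n\cdot \psi(3^{n-1})^{\gamma}.
\]

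Finally, the hypothesis $\sum_{n\ge 1}n\cdot 2^n\psi(3^n)^{\gamma}<\infty$ is equivalent to $\sum_{n\ge 1}n\cdot 2^n\psi(3^{n-1})^{\gamma}<\infty$ by a shift of index (the ratio of consecutive terms is bounded since $\psi$ is non-increasing). Therefore $\sum_n \mu(E_n)<\infty$, and the convergence part of the Borel--Cantelli lemma yields $\mu(\mathcal{W}^{(1)}(\psi))=0$.

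There is no serious obstacle: the whole argument is a one-line application of the preceding counting Corollary combined with standard Borel--Cantelli, the only mild subtlety being the translation between the condition $\mathcal{P}(p/q)\le \log_2\log q$ and the uniform bound $\mathcal{P}(p/q)\le M+\log_2 n$ on dyadic blocks, which is handled by the elementary estimate $\log_2\log q\le \log_2 n+\log_2\log 3$.
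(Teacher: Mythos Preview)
Your proof is correct and follows exactly the route the paper intends: the paper states this corollary without proof, merely as ``a consequence'' of the preceding counting corollary, and your Borel--Cantelli argument with the dyadic-in-$3$ partition of denominators is precisely the standard way to cash that out (and mirrors the paper's own convergence argument in Section~3). The only detail you supply that the paper leaves implicit is the elementary inequality $\log_2\log q\le \log_2 n+\log_2\log 3$ for $q\le 3^n$, which is exactly what is needed to feed into the bound $\sharp\mathcal{A}\le 2^{M+2}\cdot n\cdot 2^n$.
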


 Thus, the only problem left is to consider the set $$
   \mathcal{W}^{(2)}(\psi)=\Big\{x\in \K: |x-p/q|<\psi(q), \ {\text{i.m.}}\ p/q\in \K\ {\text{reduced}}, \mathcal{P}(p/q)> \log_2 \log q \Big\}
   $$
%Now we divide the set  $\mathcal{N}_n$ into two sets:
%$$
%\mathcal{N}_n=\mathcal{A}\cup \mathcal{B},
%$$
% where
%and $$
% \mathcal{B}=\Big\{p/q\in \K:  p/q\ {\text{reduced}}\ 3^{n-1}<q\le 3^n, \ \mathcal{P}(p/q)> \log_2 n\Big\}.
% $$

\begin{rem}
 It is noticed that the quantity $\log_2 n$ occurs at two places: one is in the divergence part to seek well spaced rational numbers in $\K$ (see the remark in the end of Section 4); the other is in the convergence part for the period of a rational number (see $\mathcal{A}$). We donot know whether this is a coincidence or some deep relations there.

For the cardinality of $\mathcal{N}_n$, there are some progress recently in \cite{Rahm, Sch}. But it is still far from being clear. The following are some conjectures:\begin{itemize}
  \item in \cite{Broderich3} by Broderick, Fishman \& Reich $$
\sharp \mathcal{N}_n\ll 2^{\tau n}, \ {\text{for some }}\ \tau<2.
$$

\item in \cite{FishS} by Fishman \& Simmons $$
\sharp \mathcal{N}_n\ll 2^{(1+\epsilon)n}, \ {\text{for any }}\ \epsilon>0.
$$

\item We also pose an ambitious conjecture $$
\sharp \mathcal{P}_m\ll 2^m, \ \ {\text{or}}\ \ \sharp \mathcal{N}_n\ll n\cdot 2^{n}.
$$
\end{itemize}
So, if the last conjecture is true, a complete metric theory for Mahler's first question could be established: {\em almost all or almost no points in $\K$ are intrinsically $\psi$-well approximable according to $$
\sum_{n\ge 1}n\Big(3^n\psi(3^n)\Big)^{\gamma}=\infty\ {\text{or}}\ <\infty.
$$}
\end{rem}

\section*{Acknowledgement} The authors show their sincerely appreciations to Prof. Teturo Kamae (Osaka City University) for his generosity in sharing the proof of Lemma \ref{l3}.

\end{document}